\documentclass[11pt]{amsart}

\usepackage{macros}

\usepackage[alphabetic,initials]{amsrefs}

\makeatletter
\let\oldabs\abs
\def\abs{\@ifstar{\oldabs}{\oldabs*}}
\makeatother

\setlength{\parskip}{.04 in}
\setlength{\textwidth}{5.7 in}
\setlength{\evensidemargin}{0.5 in}
\setlength{\oddsidemargin}{0.5 in}
\setlength{\textheight}{8.5 in}



\DeclareMathOperator{\ch}{\mathrm{ch}}

\title{Nowhere vanishing 1-forms on 4-folds}
\author{Benjamin Church}
\address{Department of Mathematics, Stanford University, 
380 Jane Stanford Way, CA 94086} 
\email{{\tt bvchurch@stanford.edu}}

\date{\today}

\subjclass[2020]{Primary: 14J10. Secondary: 14D06, 14K05.}

\begin{document}

\begin{abstract}
In this note, we prove -- in dimension at most $4$ -- a conjectue of Hao which says that a morphism $f : X \to A$ to a simple abelian variety $A$ is smooth if and only if there is a $1$-form pulled back from $A$ without any zeros. We also give a complete classification of $4$-folds with a $1$-form without zeros not admitting a map to an elliptic curve.
\end{abstract}

\maketitle

\makeatletter
\newcommand\@dotsep{4.5}
\def\@tocline#1#2#3#4#5#6#7{\relax
  \ifnum #1>\c@tocdepth 
  \else
    \par \addpenalty\@secpenalty\addvspace{#2}%
    \begingroup \hyphenpenalty\@M
    \@ifempty{#4}{%
      \@tempdima\csname r@tocindent\number#1\endcsname\relax
    }{%
      \@tempdima#4\relax
    }%
    \parindent\z@ \leftskip#3\relax
    \advance\leftskip\@tempdima\relax
    \rightskip\@pnumwidth plus1em \parfillskip-\@pnumwidth
    #5\leavevmode\hskip-\@tempdima #6\relax
    \leaders\hbox{$\m@th
      \mkern \@dotsep mu\hbox{.}\mkern \@dotsep mu$}\hfill
    \hbox to\@pnumwidth{\@tocpagenum{#7}}\par
    \nobreak
    \endgroup
  \fi}
\def\l@section{\@tocline{1}{0pt}{1pc}{}{\bfseries}}
\def\l@subsection{\@tocline{2}{0pt}{25pt}{5pc}{}}
\makeatother


\tableofcontents

\section{Introduction}

Advances in Hodge theory, culminating in the celebrated work of Popa and Schnell \cite{PS14}, showed that existence of $1$-forms without zeros on a smooth projective variety bounds its Kodiara dimension. In \cite{Church24, CCH23} and independently \cite{HWZ24} the authors show that such $1$-forms drastically constrain the geometry of the underlying variety. We take the guiding principle that $1$-forms without zeros should be explained geometrically by the existence of a smooth map to an abelian variety. In the non-minimal case, this is not true on the nose (see \cite[Example 1.1]{Church24}) and one must first pass to a birational model in order to obtain a smooth map to an abelian variety. In \cite{Church24}, the author proved that, assuming the existence of good minimal models, any $X$ admitting a $1$-form without zeros is birationally equivalent to an isotrivial smooth fibration over an abelian variety. More generally, Hao and Schreieder \cite{HS21(1)} made the following conjecture.

\begin{Lconj}\cite[Conjectures 1.7 and 1.8]{HS21(1)} \label{conj:HS21}
Let $X$ be a smooth projective variety that admits a holomorphic $1$-form without zeros. Then $X$ is birational to a smooth projective variety $X'$ with a smooth morphism $X' \to A$ to a positive-dimensional abelian variety $A$. Moreover, if $\kappa(X) \ge 0$ we can choose $X' \to A$ to be isotrivial.
\end{Lconj}

Note that birational isotriviality of $X \to A$ is already expected from the results and conjectures of \cite{MP21, Taji23}. In fact, \cite{MP21} predict a stronger conclusion: that $X$ becomes birational to a product after pulling back along an \textit{isogeny} $A' \to A$ rather than an arbitrary \etale cover. When $X$ admits a good minimal model, the author verified this stronger conclusion in \cite{Church24}. Here, we attempt to explicitly understand the birational transformations $X \birat X'$ needed to arrive at a model $X' \to A$ equipped with an isotrivial (not just birationally isotrivial) fibration over an abelian variety. In the case of surfaces and $3$-folds, \cite{HS21(1)} proves that the birational modification $X \birat X'$ in Conjecture~\ref{conj:HS21} can be taken to be a sequence of blow-downs whose centers are embedded elliptic curves mapping nontrivially to $\Alb_X$. Here we consider the case of $4$-folds.
 \par 
 The results of \cite{Church24} hold unconditionally for $4$-folds using the results of Fujino \cite{Fuj10} on abundance for irregular $4$-folds. In particular, the ``$\kappa(X) \ge 0$'' case of Conjecture~\ref{conj:HS21} holds for $4$-folds by \cite[Theorem 4.1]{Church24}. Except possibly when $X$ admits a map to an elliptic curve, we verify the corresponding remaining cases of Conjecture~\ref{conj:HS21} for $4$-folds and give an explicit blow-down description of the modification $X \birat X'$.

\begin{Lthm}[=Theorem~\ref{thm:structure_no_elliptic_factor}]\label{into:thm:structure_no_elliptic_factor}
Let $X$ be a smooth projective $4$-fold with a nowhere vanishing holomorphic $1$-form $\omega$. Assume $\Alb_X$ does not contain an elliptic curve factor. Then there exists a sequence of blow-downs $\pi : X \to X'$ whose centers are smooth surfaces along which $\omega$ is nonvanishing, and a smooth map $X' \to A$ to an abelian variety.
\end{Lthm}

Hao made a related conjecture giving more precise control over the structure in Conjecture~\ref{conj:HS21} when the abelian variety is simple:

\begin{Lconj} \cite[Conj.~1.5]{Hao23}.
Let $f : X \to A$ be a morphism from a smooth projective variety $X$ to a simple abelian variety $A$. The following are equivalent:
\begin{enumerate}
\item there exists a holomorphic $1$-form $\omega \in H^0(A, \Omega^1_A)$ such that $f^{\ast}\omega$ is nowhere vanishing;
\item $f : X \to A$ is smooth.
\end{enumerate}
\end{Lconj}

We prove this conjecture for $4$-folds and give a complete classification of the possible examples.

\begin{Lthm}[=Theorem~\ref{thm:smooth_map_to_simpleAV}]\label{intro:thm:smooth_map_to_simpleAV}
Let $f : X \to A$ be a morphism from a smooth projective 4-fold $X$ to a simple abelian variety $A$. The following are equivalent:
\begin{enumerate}
\item[(a)] there exists a holomorphic one-form $\omega \in H^0(A, \Omega^1_A)$ such that $f^{\ast}\omega$ is nowhere vanishing;
\item[(b)] $f : X \to A$ is smooth.
\end{enumerate}
Moreover, when this holds then $X$ has the following structure.
\begin{enumerate}
    \item If $\dim{A} = 2$ then there exists a factorization $X \to X^{\min} \to A$ such that $X \to X^{\min}$ is a composition of blowups at \etale multi-sections of $X^{\min} \to A$, and one of the following holds:
    \begin{enumerate}
        \item $K_{X^{\min}}$ is nef, in which case $X^{\min} \cong Z \times^G A'$ where $A' \to A$ is an isogeny, $G \subset \ker{(A' \to A)}$, and $Z$ is a smooth proper surface with a $G$-action.
        \item $X^{\min}$ admits a conic bundle structure $X^{\min} \to Y$ and the Stein factorization $Y \to A' \to A$ is the composition of a smooth isotrivial fibration in curves $Y \to A'$ and an isogeny $A' \to A$.
        \item The Stein factorization $X^{\min} \to A' \to A$ is the composition of a smooth del Pezzo fibration $X^{\min} \to A'$ and an isogeny $A' \to A$.
    \end{enumerate}
    \item If $\dim{A} = 3$ then the Stein factorization $X \to A' \to A$ is the composition of a smooth isotrivial fibration in curves and an isogeny. 
    \item If $\dim{A} \ge 4$ then $X$ is an abelian variety and $X \to A$ is an isogeny.
\end{enumerate}
\end{Lthm}

The proofs of all these results rely on a careful study of extremal contractions on smooth $4$-folds and the interaction between the exceptional locus and the non-vanishing $1$-form.

\textbf{Conventions.} We work over the complex numbers. A \textit{variety} is an integral separated scheme of finite type over $\CC$. A \textit{minimal model} is a projective variety $X$ with terminal $\Q$-factorial singularities such that $K_{X}$ is nef. We say a minimal model is \textit{good} if $n K_X$ is base-point-free for some $n > 0$.

\textbf{Acknowledgements.} 
I thank Ravi Vakil, Mihnea Popa, and Nathan Chen for many enlightening conversations and comments on this manuscript as well as the Department of Mathematics at Harvard University for its hospitality during the 2024-2025 academic year. During the preparation of this article, the author was partially supported by an NSF Graduate Research Fellowship under grant DMS-2103099 DGE-2146755.

\section{Extremal Contractions and Nonvanishing $1$-Forms}

By a \textit{contraction} we mean a proper morphism $f : X \to Y$ of varieties with $f_* \struct{X} = \struct{Y}$. A contraction is \textit{elementary} if the relative Picard number $\rho(X) - \rho(Y)$ is $1$. We call a contraction \textit{Fano-Mori} (or \textit{good} in the terminology of \cite{AW98}) if $-K_X$ is $f$-ample. An \textit{extremal contraction} is a contraction arising from an extremal $K_X$-negative curve in the cone of effective curves. These are particularly important examples of elementary Fano-Mori contractions. In this section, we will understand the possible types of extremal contractions from a smooth $4$-fold carrying $1$-forms with nonvanishing conditions. First, we review some terminology and fundamental results on the structure of contractions on 4-folds.

\begin{defn}
The \textit{type} of a contraction $f : X \to Y$ is the pair of integers 
\[ (n,m) := (\dim \Exc(f), \dim f(\Exc(f)) \]
where $\Exc(f) \subset X$ is the locus on which $f$ is not an isomorphism. We say a contraction is 
\begin{enumerate}
\item \textit{fiber type} if $\Exc(f) = X$ 
\item \textit{birational} if $f$ is birational i.e.\ $\dim \Exc(f) < \dim{X}$
\item \textit{divisorial} if $\Exc(f)$ is a divisor
\item \textit{small} if $\dim{\Exc(f)} \le \dim{X} - 2$
\end{enumerate}
\end{defn}

Recall that for smooth 3-folds there are no small extremal contractions (hence no flips are necessary until MMP operations introduce singularities) and for smooth 4-folds, small contractions are well behaved according to the following result of Kawamata \cite[Theorem 1.1]{Kaw89}.

\begin{theorem} \label{thm:small_contraction}
Let $X$ be a smooth projective $4$-fold and $f : X \to Y$ a small elementary contraction. Then every connected component $E \subset \Exc(f)$ satisfies $E \cong \P^2$ and $N_{E|X} \cong \struct{\P^2}(-1)^{\oplus 2}$. 
\end{theorem}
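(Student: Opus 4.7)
The first move is to apply Wisniewski's inequality to the elementary Fano-Mori contraction $f$: for every irreducible component $E$ of $\Exc(f)$ and every fiber $F$ of $f|_E$,
\[ \dim E + \dim F \;\ge\; \dim X + l(R) - 1, \]
where $l(R) \ge 1$ is the length of the contracted extremal ray $R$. Smallness forces $\dim E \le 2$, and $F \subseteq E$ is positive-dimensional with $\dim F \le 2$. Substituting $\dim X = 4$ yields $\dim E = \dim F = 2$ and $l(R) = 1$. Therefore each connected component of $\Exc(f)$ coincides with a single $2$-dimensional fiber $E$, and every extremal rational curve $C \subset E$ satisfies $-K_X \cdot C = 1$.

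Next I would identify $E$ with $\P^2$. Because the contraction is elementary, all curves contained in $E$ are numerically proportional in $X$, forcing $\rho(E) = 1$. Combined with $-K_X|_E$ ample and the fact that $E$ is covered by rational curves of $-K_X$-degree exactly $1$, one invokes the classification of $2$-dimensional fibers of length-$1$ extremal contractions from a smooth $4$-fold (Andreatta-Wisniewski, Kachi), or else runs Mori's bend-and-break together with his characterization of projective space, to conclude $E \cong \P^2$.

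With $E \cong \P^2$ in hand, the conormal sequence together with the relation $-K_X \cdot \ell = 1$ for a line $\ell \subset E$ yields, via adjunction, $\det N_{E|X} \cong \struct{\P^2}(-2)$. To upgrade this to the splitting $N_{E|X} \cong \struct{\P^2}(-1)^{\oplus 2}$, I would use rigidity of $E$: because nearby fibers of $f$ are $0$-dimensional by smallness, $E$ admits no nontrivial embedded deformations inside $X$ and hence $H^0(E, N_{E|X}) = 0$. Over $\P^2$, this rules out the possibility $\struct{}(-2)\oplus\struct{}$; the remaining non-split rank-$2$ bundles with $c_1 = -2$ are excluded by the Nakano-Fujiki formal contractibility criterion, which demands the conormal bundle be globally generated on the formal neighborhood of $E$.

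The principal obstacle is the middle step, identifying $E$ with $\P^2$: the dimension count and adjunction formula are essentially formal, but pinning down the isomorphism type of the $2$-dimensional fiber is the deep input, requiring either the Andreatta-Wisniewski classification of length-$1$ contractions on smooth $4$-folds or a direct argument combining the deformation theory of minimal rational curves in $E$ with the elementary hypothesis. Once $E \cong \P^2$ is established, the normal bundle splitting is a comparatively clean consequence of rigidity and the classification of rank-$2$ bundles on $\P^2$.
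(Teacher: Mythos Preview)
The paper does not prove this statement at all: it is quoted verbatim as \cite[Theorem 1.1]{Kaw89} and used as a black box. There is therefore no ``paper's own proof'' to compare against; your proposal is a sketch of Kawamata's original result, not of anything appearing in this paper.

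As a sketch of Kawamata's theorem, your outline is plausible in shape but has two soft spots you should be aware of. First, the claim that $\rho(E)=1$ because ``all curves contained in $E$ are numerically proportional in $X$'' does not follow: numerical proportionality in $N_1(X)$ constrains only the image of $N_1(E)\to N_1(X)$, not $\rho(E)$ itself. One actually needs additional input (e.g.\ a study of the minimal rational curves on $E$ and their deformations, as in Kawamata's original argument, or the later Andreatta--Wi\'sniewski fiber classification you allude to) to pin $E$ down as $\P^2$. Second, for the normal bundle you have only computed $c_1(N_{E|X})=-2$; to exclude bundles such as $T_{\P^2}(-2)$ (which also has $c_1=-2$ and $H^0=0$) you need $c_2$ as well. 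This comes from restricting $N_{E|X}$ to a line and using that the line deforms freely inside $E$, which forces $N_{E|X}|_\ell \cong \mathcal{O}(-1)^{\oplus 2}$ and hence the uniform splitting type; the Nakano--Fujiki criterion you invoke is not the direct route here. Note also that Wi\'sniewski's inequality, which you use as the first move, postdates Kawamata's paper, so your outline is a modernized approach rather than a reconstruction of the original argument.
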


After Mori's groundbreaking work understanding contractions and flips on threefolds \cite{Mor82, Mor88}, during the 80s and 90s, a serious effort was made to understand completely the operations of the MMP in dimension $4$. This led to a nearly complete classification of extremal contractions on smooth 4-folds carried out in numerous papers \cite{And85, Kaw89, Kac97, AW98, Tak99}. 
\par
On any non-singular projective variety $X$ of dimension $n$ \cite{And85} showed that type $(n,n-1)$ and type $(n-1,n-2)$ extremal contractions behave analogously to the $3$-fold case \textit{provided} $f : \Exc{f} \to f(\Exc{f})$ is assumed to be equidimensional. Unlike the case of $3$-folds, however, equidimensionality may fail. Explicitly, he proves that equidimensional type $(n,n-1)$ extremal contractions are flat conic bundles over a smooth base (\cite[Theorem 3.1]{And85}) and type $(n-1,n-2)$ contractions are blowups of a smooth base along a smooth subvariety of codimension $2$ (\cite[Theorem 2.3]{And85}). Luckily, the failure of equidimensionality is quite mild in the case $n = 4$. In either type $(4,3)$ or $(3,2)$, the only fibers of dimension $> 1$ are isolated\footnote{Otherwise, there would be a divisor $E \subset X$ contracted to a curve or a point of $Y$. In the divisorial case, this would be an additional irreducible component of $\Exc{f}$ contradicting \cite[Proposition 8-2-1]{mori_program}. In the fiber type case, any curve $C \subset E$ contracted by $f$ is numerically equivalent to a multiple of the general fiber of $f : X \to Y$ so $C \cdot E = 0$. However, if $f : X \to Y$ is a morphism of relative dimension $r$ and $E \subset X$ is a divisor such that $\codim_Y{f(E)} > 1$ then there exists a curve $C \subset E$ such that $C \cdot E < 0$. Indeed, let $S$ be a general slice of $\dim{Y} - 1$ divisors in $|f^* H_Y|$ and $r$ divisors in $H$ and $C = E \cap S$ then $S$ is a normal surface not contracted by $f$ but $C$ is contracted by $f$ so $E \cdot C = (C^2)_S < 0$. This means that an elementary contraction of fiber type cannot have a divisor $E \subset X$ contracted to a subvariety of codimension $>1$} fibers of pure (\cite[Proposition 4.1]{AW98}) dimension $2$. Moreover, \cite{Kac97} and \cite{AW98} have classified the possible 2-dimensional fibers that may appear.

\subsubsection{Notation}

For $r \ge 0$ let $\FF_r = \P_{\P^1}(\struct{\P^1} \oplus \struct{\P^1}(r))$ be the Hirzebruch surface. If $r \ge 1$ then $\FF_r$ has a unique section $s_0$ with negative self-intersection $-r$ corresponding to the quotient $\struct{\P^1} \oplus \struct{\P^1}(r) \onto \struct{\P^1}$ and let $f$ be the class of a fiber. The relative tautological bundle $\struct{\FF_r}(1)$ is isomorphic to $\struct{\FF_r}(s_0 + r f)$. Note that $\struct{\FF_r}(K_{\FF_r}) = \struct{\FF_r}(-2) \ot \pi^* \struct{\P^1}(r - 2) = \struct{\FF_r}(-2 s_0 - (r + 2) f)$. For $r \ge 1$ let $\SS_r$ be the projective cone over the rational normal curve $\P^1 \embed \P^{r}$ of degree $r$. Alternatively, we may describe $\SS_r$ as the image of $\FF_r \to \P^{r+1}$ defined by the basepoint-free complete linear series of $\struct{\FF_r}(s_0 + r f)$ which exactly contracts $s_0$. Note that $\struct{\SS_r}(1)$ arising from the restriction of $\struct{\P^{r+1}}(1)$ thus agrees with its other possible definition, the line bundle pulling back to the tautological bundle $\struct{\FF_r}(1)$. Note that $\SS_r$ is normal but not Gorenstein or even lci for $r > 2$. Furthermore, $K_{\SS_r} = - \frac{r+2}{r} H$ where $H = c_1(\cO_{\SS_r}(1))$ is the hyperplane class and the unique exceptional divisor has discrepancy $a = \frac{2}{r} - 1$. Indeed, $\pi : \FF_r \to \SS_r$ is the minimal resolution -- performed via a single blowup of the cone point which is a cyclic quotient $(1,1)_r$ singularity. Note that when $r = 2$ then $\SS_r$ has canonical singularities (indeed a single $A_1$ singularity) and $\FF_2 \to \SS_2$ is crepant but when $r > 2$ the discrepancy is negative. Furthermore, the embedding dimension of the cone point of $\SS_r$ is $r+1$.
 
\subsubsection{Classification of Isolated $2$-Dimensional Fibers}

Here we record some results of \cite{Kac97} and \cite{AW98} on isolated $2$-dimensional fibers of Fano-Mori contractions. The reader should note that when these authors refer to the ``fiber'' they often mean reduced (or in the terminology of \cite{AW98} ``geometric'') fiber rather than the natural scheme-theoretic structure on the fiber.

\begin{prop} \cite[Proposition 4.2.1]{AW98} \label{prop:isolated_fibers}
Let $f : X \to Y$ be a Fano-Mori contraction of a $\Q$-factorial log terminal $4$-fold and $F$ a fiber of (pure) dimension $2$. Then any (reduced) irreducible component $F' \subset F$ is normal and the pair $(F', \struct{F'}(-K_X))$ must be one of the following
\begin{enumerate}
    \item $(\P^2, \struct{\P^2}(e))$ with $e = 1,2$
    \item $(\FF_r, \struct{\FF_r}(s_0 + k f))$ with $r \ge r + 1$ and $r \ge 0$
    \item $(\SS_r, \struct{\SS_r}(1))$ with $r \ge 2$.
\end{enumerate}
\end{prop}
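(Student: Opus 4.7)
Set $L := \struct{F'}(-K_X)$, which is an ample $\Q$-Cartier divisor class on $F'$ because $f$ is Fano-Mori. My plan is to prove that $(F', L)$ is a normal polarized surface of $\Delta$-genus zero and then to invoke Fujita's classification of $\Delta$-genus zero polarized surfaces, which gives exactly the three families in the statement.

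The first step is to establish normality of $F'$. Since $X$ is $\Q$-factorial klt and $-K_X$ is $f$-ample, Kawamata-Viehweg vanishing applied to $\struct{X}(-K_X)$ gives $R^i f_* \struct{X} = 0$ for $i > 0$. Combined with the standard yoga of Koll\'ar's injectivity and cohomology-and-base-change, this implies every connected fiber of $f$ has rational singularities, hence is Cohen-Macaulay and $S_2$. Since $F'$ is reduced, irreducible, and two-dimensional, it must then be normal.

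Next, I would extract numerical constraints on $(F', L)$. The main inputs are the Mori-Ionescu-Wi\'sniewski inequality for extremal rays on a $4$-fold, which bounds the length $\ell(R) := \min_C (-K_X \cdot C)$ of the contracted ray by $\dim F' + 1 = 3$, together with bend-and-break to produce rational curves of small $L$-degree in $F'$. Combining these with a computation of $\chi(F', nL)$ via Kawamata-Viehweg vanishing on $X$ applied to $-nK_X$ for $n > 0$, I expect to obtain $\Delta(F', L) := L^2 + 2 - h^0(F', L) = 0$ and hence a classical polarized scroll or cone.

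With $\Delta(F', L) = 0$ and $F'$ a normal surface polarized by an ample $L$ of small self-intersection, Fujita's theory leaves precisely the three families in the statement: $(\P^2, \struct{\P^2}(e))$ for $e = 1, 2$; the Hirzebruch pairs $(\FF_r, \struct{\FF_r}(s_0 + k f))$ with $k \ge r + 1$; and the cones $(\SS_r, \struct{\SS_r}(1))$ over the rational normal curves of degree $r \ge 2$. I expect the main obstacle to be the singular cone case $\SS_r$ with $r \ge 3$, which is not even Gorenstein: to reach this case one must control the singularities of $F'$ finely enough to rule out smoothings or partial resolutions of the cone point that would be compatible with the rational-singularities-of-fibers theorem but not with the ambient geometry of $X$. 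This step should use the hypothesis that $F$ is pure of dimension $2$ (so that $F'$ is not a component of a higher-dimensional fiber and Mori-cone estimates apply directly to $F'$) together with an analysis of $-K_X$-negative curves pulled back through the minimal resolution $\FF_r \to \SS_r$.
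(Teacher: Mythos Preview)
The paper does not prove this proposition at all: it is quoted verbatim from \cite[Proposition~4.2.1]{AW98} and used as a black box. So there is no ``paper's own proof'' to compare against. That said, your overall strategy (reduce to Fujita's $\Delta$-genus zero classification) is indeed the one Andreatta--Wi\'sniewski use, so you are on the right track in spirit.

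There are, however, real gaps in your sketch. First, your normality argument is too quick: $R^i f_* \struct{X} = 0$ gives information about the \emph{whole} scheme-theoretic fiber, not about an individual reduced irreducible component $F'$. You cannot conclude ``$F'$ is $S_2$ hence normal'' from Cohen--Macaulayness of the fiber alone; you also need $R_1$, and passing from the fiber to a component can lose $S_2$ as well. In \cite{AW98} this is handled by a horizontal slicing argument reducing to the classification of 3-fold contractions, not by a direct cohomology-and-base-change argument. Second, your claimed computation of $\Delta(F',L)=0$ is really the heart of the matter and you have only asserted it: the inputs you list (length bounds, bend-and-break, vanishing for $-nK_X$) do not by themselves pin down $h^0(F',L)$ on a possibly singular component; the actual argument again proceeds via slicing and comparison with the known 3-fold case. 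Finally, your discussion of $\SS_r$ as the ``main obstacle'' is misplaced: once $\Delta=0$ is established, Fujita's classification already yields all three families including the cones, so there is nothing further to rule out at that stage.
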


Using this, we prove the main result of this section constraining the possible types of extremal contractions on smooth fourfolds carrying $1$-forms with nonvanishing properties:

\begin{theorem} \label{thm:contractions}
Let $X$ be a smooth projective $4$-fold with a nowhere-vanishing $1$-form $\omega$. Then any extremal contraction $f : X \to Y$ must be of the following form:
\begin{enumerate}
\item $Y$ is a smooth projective $4$-fold with a nowhere vanishing $1$-form $\omega_Y$ pulling back to $\omega$ and $f : X \to Y$ is the blowup along a smooth surface $S \subset Y$ such that $\omega_Y|_S$ is nowhere vanishing
\item $Y$ is a $\Q$-factorial terminal 4-fold with a $1$-form $\omega_Y$ and $f : X \to Y$ is the blowup of a smooth elliptic curve $E \subset Y$ such that $\omega_Y|_E$ is nowhere vanishing and $f^* \omega_Y = \omega$
\item a (flat) conic bundle over a smooth $3$-fold $Y$ carrying a nowhere vanishing $1$-form
\item a (flat) del Pezzo fibration over a smooth surface $Y$ carrying a nowhere vanishing $1$-form
\item a smooth Fano 3-fold fibration over an elliptic curve $Y \cong E$.
\end{enumerate}
\end{theorem}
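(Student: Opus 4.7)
The recurring tool is the following principle: if $F \subset X$ is rationally chain connected---for instance, any fibre or fibre component of a Fano--Mori contraction---then the pullback of $\omega$ to a smooth model of $F$ vanishes, so $T_p F \subset \ker \omega_p$ at any smooth point $p \in F$, and the global surjection $\omega : T_X \to \struct{X}$ restricted to $F$ factors through $T_X|_F \to N_{F|X}$, producing a surjection $N_{F|X}|_{F^{\mathrm{sm}}} \to \struct{F^{\mathrm{sm}}}$. I will apply this to exceptional loci, isolated higher-dimensional fibre components, and general fibres.

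First I would rule out the types of extremal contractions that do not appear in the list (1)--(5). For a small contraction, Theorem~\ref{thm:small_contraction} identifies each component of $\Exc(f)$ with $\P^2$ carrying normal bundle $\struct{\P^2}(-1)^{\oplus 2}$; a trivial quotient would correspond by duality to a pair of sections of $\struct{\P^2}(1)$ with empty common vanishing, which does not exist. For a divisorial contraction of type $(3,0)$ the exceptional divisor $E$ is Fano by adjunction and hence rationally connected, so the trivial quotient of the line bundle $N_{E|X}$ would force $N_{E|X}|_{E^{\mathrm{sm}}} \cong \struct{E^{\mathrm{sm}}}$---contradicting $N_{E|X}\cdot C = E \cdot C < 0$ on any curve $C \subset E^{\mathrm{sm}}$ contracted by $f$. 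For type $(4,0)$, $X$ would be a smooth Fano $4$-fold, hence rationally connected, contradicting the existence of $\omega$.

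Next I would address the divisorial types $(3,2)$ and $(3,1)$. In either case, any hypothetical isolated $2$-dimensional fibre component $F'$ is rational by Proposition~\ref{prop:isolated_fibers}, and a case-by-case check over the models $\P^2, \FF_r, \SS_r$ with the allowed polarization $\struct{F'}(-K_X)$ shows $N_{F'|X}$ admits no trivial quotient, so no such fibres occur and the contraction is equidimensional. Ando's theorem \cite[Thm.\ 2.3]{And85} then identifies a $(3,2)$-contraction with the blowup of a smooth surface $S$ in a smooth $Y$; one has $\omega = f^*\omega_Y$ for a unique $\omega_Y \in H^0(Y, \Omega^1_Y)$ by the birational invariance of global $1$-forms, and computing $df_p$ at $p \in f^{-1}(S)$---whose image is $T_{f(p)}S + \ell_p$ for $\ell_p \subset N_{S|Y,f(p)}$ the line determined by $p$---shows $\omega$ is nowhere vanishing on $X$ exactly when $\omega_Y|_S$ is. The analogous analysis identifies a $(3,1)$-contraction as the blowup of a smooth curve $C$ in a $\Q$-factorial terminal $Y$, and a nowhere vanishing $1$-form on the smooth complete curve $C$ forces $C$ to be elliptic.

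For each fibre type $(4,m)$ with $m \in \{1,2,3\}$ the general fibre is a smooth Fano of dimension $4-m$, hence rationally connected. Applying the guiding principle to the general fibre together with the relative cotangent sequence gives $\omega = f^*\omega_Y$ for a necessarily nowhere-vanishing $\omega_Y$ on $Y$. The same isolated-fibre analysis combined with the footnote argument (no divisor of $X$ can be contracted to higher codimension in a fibre type) forces equidimensionality; Ando \cite[Thm.\ 3.1]{And85} then packages $(4,3)$ as a flat conic bundle over a smooth $3$-fold, while $(4,2)$ becomes a flat del Pezzo fibration over a smooth surface and $(4,1)$ a Fano $3$-fold fibration over a smooth curve $Y$ that must be elliptic because it carries a nowhere vanishing $1$-form (smoothness of the fibration in case (5) follows since nowhere vanishing of $\omega_Y$ on the $1$-dimensional base forces $df_p$ to be surjective at every $p$). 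The main technical obstacle is the isolated-fibre case analysis: for each pair $(F', \struct{F'}(-K_X))$ in Proposition~\ref{prop:isolated_fibers}, one must compute $N_{F'|X}$ using adjunction together with the embedding $F' \subset \Exc(f)$ in the divisorial case, and verify no trivial quotient is possible. A secondary delicate point is smoothness of $Y$ in the $(4,2)$ and $(4,1)$ fibre types, which requires going beyond Ando's original conic-bundle statement.
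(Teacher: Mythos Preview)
Your overall architecture matches the paper's: casework on the type $(n,m)$, ruling out small and type $(3,0)$/$(4,0)$ contractions, and eliminating isolated $2$-dimensional fibres to force equidimensionality so that Ando's and Takagi's structure theorems apply. However, the central technical step---``a case-by-case check \ldots\ shows $N_{F'|X}$ admits no trivial quotient''---is where the real content lies, and your sketch does not supply it.

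The issue is that adjunction only gives you $\det N_{F'|X} = \struct{F'}(-K_X + K_{F'})$; it does not determine $N_{F'|X}$ itself, and a rank-$2$ bundle with prescribed (even anti-ample) determinant can certainly surject onto $\struct{}$. The paper's key input is a deformation-theoretic inequality (Lemma~\ref{lemma:fiber_normal_inequality}): because $F'$ is a maximal-dimensional component of an \emph{isolated} fibre, the Hilbert scheme of $X$ cannot deform $F'$, forcing $h^0(N_{F'|X}) \le h^1(N_{F'|X})$. One then checks (Corollary~\ref{cor:conormal_nonvanishing}) that a trivial quotient $N_{F'|X} \twoheadrightarrow \struct{F'}$ would give $h^0 > 0$ and $h^1 = 0$ once $H^1(F',\struct{F'}) = H^1(F',\det N_{F'|X}) = 0$---and it is this last vanishing that is verified surface-by-surface. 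Without the deformation inequality you have no leverage to exclude the trivial quotient.

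Two further points. First, for $\SS_r$ with $r \ge 3$ the surface is not lci in $X$, so $N_{F'|X}$ is not locally free and your normal-bundle argument does not apply as stated; the paper instead observes that the cone point has embedding dimension $r+1 \ge 4$, so $\Omega_X \otimes \kappa(p)$ is spanned by the conormal of $F'$, which lies in the kernel of $\omega(p)$ since $\omega$ is pulled back from $Y$. Second, your treatment of type $(3,1)$ is misplaced: the generic fibre over the image curve is already $2$-dimensional, so Proposition~\ref{prop:isolated_fibers} does not apply; the paper instead invokes Takagi's classification \cite{Tak99} directly to identify $f$ as the blowup of a smooth curve. Your acknowledged gap on smoothness of $Y$ in type $(4,2)$ is handled in the paper by citing \cite[Lemma 3.10(iii)]{Cas08}, after using \cite[Lemma 2.3]{HS21(1)} to get equidimensionality.
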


The main idea will be to show that isolated 2-dimensional fibers of Fano-Mori contractions will force global 1-forms to have a zero. Indeed the following observation is elementary:

\begin{prop} \label{prop:conormal_vanishing}
Let $X$ be a smooth $4$-fold, $\omega$ a nowhere vanishing $1$-form, and $E \subset X$ be a surface satisfying the following properties:
\begin{enumerate}
\item $E$ is rational and has only rational singularities
\item $\Omega_E$ has no torsion
\item $E \embed X$ is a regular embedding
\end{enumerate}
then $N_{E|X}^\vee$ must have a non-vanishing section.
\end{prop}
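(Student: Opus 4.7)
The plan is to restrict $\omega$ to $E$ and use the conormal sequence of $E \subset X$ to extract a section of $N_{E|X}^\vee$, then verify it is nowhere vanishing using the corresponding property of $\omega$.

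First, I would record the conormal sequence
\[ 0 \to N_{E|X}^\vee \to \Omega_X|_E \to \Omega_E \to 0. \]
The right-exactness is standard. For the left-exactness I would note that, since $E \subset X$ is a regular embedding of codimension $2$, the conormal sheaf $N_{E|X}^\vee = \mathcal{I}_E/\mathcal{I}_E^2$ is locally free of rank $2$ and hence torsion-free, while the first map is a subbundle inclusion over the smooth locus of $E$ by classical differential geometry; so its kernel is a torsion subsheaf of a torsion-free sheaf and therefore zero.

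Next I would show that $\omega|_E \in H^0(E, \Omega_X|_E)$ lifts to a section $\tilde\omega \in H^0(E, N_{E|X}^\vee)$. By the cohomology long exact sequence associated to the conormal sequence, it suffices to check that $H^0(E, \Omega_E) = 0$. This is the main content. Using the hypothesis that $\Omega_E$ is torsion-free, the natural adjunction map $\Omega_E \to \pi_* \Omega_{\tilde E}$ for a resolution $\pi : \tilde E \to E$ is injective, since its kernel is torsion (supported on the singular locus of $E$, where $\pi$ is an isomorphism). Taking global sections gives an injection $H^0(E, \Omega_E) \hookrightarrow H^0(\tilde E, \Omega_{\tilde E})$; rationality of $E$ passes to the birational resolution $\tilde E$, and any smooth projective rational surface satisfies $h^{1,0} = 0$, so the right hand side vanishes.

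Finally, to see that $\tilde\omega$ is nowhere vanishing, I would observe that if $\tilde\omega$ vanished in the fiber $N_{E|X}^\vee \otimes k(p)$ at some $p \in E$, then its image in $\Omega_X \otimes k(p)$ would also vanish; but this image equals the value $\omega(p)$ of the original $1$-form at $p$, contradicting the nowhere-vanishing hypothesis on $\omega$. I do not anticipate a serious obstacle, since the argument is formal once the two cohomological inputs are in place; the most substantive ingredient is the vanishing $H^0(E, \Omega_E) = 0$, which rests on the torsion-freeness of $\Omega_E$ combined with the rationality of $E$.
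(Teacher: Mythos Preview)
Your proposal is correct and follows essentially the same approach as the paper: restrict $\omega$ to $E$, use the conormal sequence together with the vanishing $H^0(E,\Omega_E)=0$ (obtained from torsion-freeness of $\Omega_E$ plus rationality of a resolution) to lift to a section of $N_{E|X}^\vee$, and then observe that this lift inherits nowhere vanishing from $\omega$. Your write-up is in fact a bit more detailed than the paper's, which asserts left-exactness of the conormal sequence and the non-vanishing of the resulting section without spelling out the arguments you give.
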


\begin{proof}
consider the exact sequence
\begin{center}
\begin{tikzcd}
0 \arrow[r] & N_{E|X}^\vee \arrow[r] & \Omega_X|_{E} \arrow[r] & \Omega_E \arrow[r] & 0
\end{tikzcd}
\end{center}
which is exact since $E \embed X$ is a regular embedding. Since $\Omega_E$ has no torsion, any $\omega \in H^0(E, \Omega_E)$ must be nonzero at the generic point and hence pulls back to a nonzero form under a resolution $\wt{E} \to E$. Since $E$ is rational $H^0(\wt{E}, \Omega_{\wt{E}}) = 0$ hence, by the vanishing of torsion, $H^0(E, \Omega_E) = 0$. Therefore, the global form $\omega \in H^0(X, \Omega_X)$ restricted to $\Omega_X|_E$ arises from a non-vanishing section of $N_{E|X}^\vee$.
\end{proof}

To show that the isolated 2-dimensional fibers will violate the conclusion of the above lemma we need a lemma.

\begin{lemma} \label{lemma:fiber_normal_inequality}
Let $f : X \to Y$ be a morphism of varities and $F \subset X$ a maximal-dimensional subscheme of $X_y$ such that $\dim{F} > \dim{X_{y'}}$ for all nearby fibers $y' \in Y \sm \{ y \}$. Suppose that $F \embed X$ is a regular embedding then 
\[ h^0(N_{F|X}) \le h^1(N_{F|X}) \]
\end{lemma}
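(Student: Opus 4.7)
The plan is to deduce the inequality from the bound
\[
\dim_{[F]} \Hilb(X) \;\geq\; h^0(N_{F|X}) - h^1(N_{F|X})
\]
coming from the standard tangent-obstruction theory of the Hilbert scheme of $X$ (which applies since $F \embed X$ is a regular embedding, giving tangent space $H^0(F, N_{F|X})$ and first-order obstructions in $H^1(F, N_{F|X})$). It therefore suffices to prove that $[F]$ is an isolated point of $\Hilb(X)$, i.e.\ $\dim_{[F]} \Hilb(X) \le 0$. After reducing to the case $F$ is connected (components deform independently and each satisfies the same hypotheses), I would argue by contradiction: suppose we had an irreducible smooth curve $B \ni b_0$ and a non-constant morphism $B \to \Hilb(X)$ with $b_0 \mapsto [F]$, giving a flat family $\mathcal{Z} \subset B \times X$ with $\mathcal{Z}_{b_0} = F$. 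Set $g := f \circ \mathrm{pr}_X : \mathcal{Z} \to Y$ and analyze the image $W := g(\mathcal{Z})$, splitting on $\dim W$.

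If $W = \{y\}$ set-theoretically, then $\mathcal{Z}_b \subset X_y$ for every $b$, so the family defines a non-constant map $B \to \Hilb(X_y)$ whose values all have the Hilbert polynomial of $F$ and all realize maximal-dimensional subschemes of $X_y$. Because $X_y$ has only finitely many top-dimensional irreducible components, the set of such subschemes is finite (hence $0$-dimensional), contradicting non-constancy.

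The more delicate case, and the main obstacle, is $\dim W \geq 1$. Here I would combine upper semicontinuity of fiber dimension for $g$ with the fiber-size hypothesis: on one hand $\{b_0\} \times F \subseteq g^{-1}(y)$ forces $\dim g^{-1}(y) \geq \dim F$; on the other, for $y' \in W \setminus \{y\}$ near $y$ we have $g^{-1}(y') \subset B \times X_{y'}$ and so $\dim g^{-1}(y') \leq 1 + \dim X_{y'} < 1 + \dim F$, while the generic fiber of $g : \mathcal{Z} \to W$ has dimension $1 + \dim F - \dim W$. Comparing these three constraints should rule out the existence of such a $B$-family. The approach I would take is to note that the scheme-theoretic image $f(\mathcal{Z}_b)$ is the reduced point $\{y\}$ at $b = b_0$ but a positive-dimensional subvariety of any preassigned neighborhood $U$ of $y$ for $b \ne b_0$ close to $b_0$; this discontinuous jump in the Hilbert polynomial of the image is incompatible with the flatness of $\mathcal{Z} \to B$, once one checks that an appropriate relative-image construction is well-behaved in families.

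Once $[F]$ has been shown isolated in $\Hilb(X)$, the Hilbert-scheme dimension bound at the start forces $0 \geq h^0(N_{F|X}) - h^1(N_{F|X})$, completing the proof. I expect the chief difficulty to lie squarely in excluding Case 2 above—that is, in showing that any positive-dimensional deformation of $F$ inside $X$ must in fact remain inside the single fiber $X_y$—since the infinitesimal theory alone does not see that nearby fibers are too small, and one must genuinely use the global (and not merely formal) nature of the family $\mathcal{Z} \to B$.
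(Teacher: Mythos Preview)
Your overall framework is exactly the paper's: use the tangent--obstruction bound $\dim_{[F]}\Hilb(X) \ge h^0(N_{F|X}) - h^1(N_{F|X})$ and then argue that a nontrivial one-parameter deformation $\mathcal{Z} \to B$ of $F$ in $X$ cannot exist. The gap is precisely where you locate it, in Case~2, and the missing ingredient is the \emph{rigidity lemma}. The paper does not attempt your dimension count or your ``jump in the Hilbert polynomial of the image'' argument; instead it observes that $\mathcal{Z} \to B$ is proper and the composite $\mathcal{Z} \to Y$ contracts the fiber $\mathcal{Z}_{b_0} = F$ to the point $y$, so by rigidity \emph{every} fiber $\mathcal{Z}_b$ is contracted to a point $y(b) \in Y$. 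For $b$ near $b_0$ one has $y(b)$ near $y$, and if $y(b) \neq y$ then $\dim \mathcal{Z}_b \le \dim X_{y(b)} < \dim F$, contradicting flatness of $\mathcal{Z} \to B$. Thus $\mathcal{Z}_b \subset X_y$ for all $b$, and Case~2 collapses to Case~1.

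Your proposed route through Case~2 does not close as written. When $\dim W = 1$ the generic fiber of $g : \mathcal{Z} \to W$ has dimension exactly $\dim F$, and your bound $\dim g^{-1}(y') \le \dim F$ is consistent with that, so no contradiction emerges from the three inequalities. More seriously, the claim that $f(\mathcal{Z}_b)$ is positive-dimensional for $b \neq b_0$ is unjustified: it is perfectly possible that each $\mathcal{Z}_b$ maps to a single point $y(b)$ with $b \mapsto y(b)$ nonconstant (so $\dim W = 1$), and in that scenario there is no ``jump in the Hilbert polynomial of the image'' to exploit. That scenario is exactly what rigidity handles. Once you invoke rigidity, the rest of your outline (including your treatment of Case~1, which matches the paper's) goes through.
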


The idea is a standard technique in deformation theory and appears as a step in the proof of the main theorem of \cite{AW98}. We reproduce a proof here for the reader's convenience.

\begin{proof}
Since $F \embed X$ is a regular embedding, the Hilbert scheme of $X$ has no local obstructions at $[F]$. Therefore, $T^i = H^{i-1}(F, N_{F|X})$ forms a tangent-obstruction theory \cite[Proposition 6.5.2]{FGAexplained}. Hence
\[ \dim_{[F]} \Hilb_X \ge h^0(N_{F|X}) - h^1(N_{F|X}) \]
so if this number is positive then there exists a pointed affine curve $(C, 0)$ and a closed subscheme $Z \subset X \times C$ flat over $C$ with $Z_0 = F$ so that $Z \to X$ has image strictly containing $F$. Consider the diagram,
\begin{center}
\begin{tikzcd}
Z \arrow[rd] \arrow[r] & X \times C \arrow[d] \arrow[r] & Y
\\
& C  
\end{tikzcd}
\end{center}
since $Z_0 = F \subset X_y$ we see that $Z \to Y$ contracts the fiber of $Z \to C$ over $0$ to a point. Since $Z \to C$ is proper, by the rigidity lemma, every fiber of $Z \to C$ is contracted. Since $Z \to C$ is flat $\dim{Z_t} = \dim{F} > \dim{X_{y'}}$ for any nearby fiber so we must have $Z_t \subset X_y$ but this is impossible because we assumed that $F$ is of maximum dimension in $X_y$ and $Z_t$ moves. Therefore $h^0(N_{F|X}) \le h^1(N_{F|X})$. 
\end{proof}

\begin{cor} \label{cor:conormal_nonvanishing}
In the situation of Lemma~\ref{lemma:fiber_normal_inequality} if moreover $F$ is a normal variety, $\codim_F{X} = 2$, $H^1(F, \struct{F}) = 0$, and $H^1(F, \det N_{F|X}) = 0$ (e.g. if $\det N_{F|X}$ is anti-ample) then $N_{F|X}^\vee$ does not admit a nowhere vanishing section.
\end{cor}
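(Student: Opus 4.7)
The plan is to suppose for contradiction that $N_{F|X}^{\vee}$ admits a nowhere vanishing section $s$ and derive the strict inequality $h^0(F, N_{F|X}) > h^1(F, N_{F|X})$, directly contradicting Lemma~\ref{lemma:fiber_normal_inequality}.

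The first step is to translate the existence of $s$ into a short exact sequence. Since $N_{F|X}$ has rank $\codim_X F = 2$ and $s$ vanishes nowhere, the induced map $\struct{F} \xrightarrow{s} N_{F|X}^{\vee}$ realizes $\struct{F}$ as a sub-line bundle with locally free rank-$1$ quotient. Taking determinants identifies this quotient with $\det N_{F|X}^{\vee}$, and dualizing produces
\[ 0 \longrightarrow \det N_{F|X} \longrightarrow N_{F|X} \longrightarrow \struct{F} \longrightarrow 0. \]
This is the only geometric input: a nowhere vanishing conormal section forces $N_{F|X}$ to fit into an extension of $\struct{F}$ by $\det N_{F|X}$.

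The second step is purely cohomological. Taking the long exact sequence in cohomology and plugging in the hypotheses $H^1(F, \struct{F}) = 0$ and $H^1(F, \det N_{F|X}) = 0$ gives $H^1(F, N_{F|X}) = 0$ together with a surjection $H^0(F, N_{F|X}) \twoheadrightarrow H^0(F, \struct{F})$. Since $F$ is proper and normal, hence irreducible and reduced, $H^0(F, \struct{F}) = \CC$, so $h^0(F, N_{F|X}) \ge 1 > 0 = h^1(F, N_{F|X})$, contradicting Lemma~\ref{lemma:fiber_normal_inequality}.

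There is no serious obstacle: the corollary is essentially a formal combination of the lemma with the splitting observation above. The parenthetical remark about $\det N_{F|X}$ anti-ample is just the observation that for the normal surfaces $F$ listed in Proposition~\ref{prop:isolated_fibers} (which have rational singularities in the cases of interest), Kawamata--Viehweg vanishing yields $H^1(F, \det N_{F|X}) = 0$ automatically, so the hypothesis is easy to verify in applications.
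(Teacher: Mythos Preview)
Your proof is correct and follows essentially the same route as the paper: translate the nowhere vanishing section of $N_{F|X}^\vee$ into an extension $0 \to \det N_{F|X} \to N_{F|X} \to \struct{F} \to 0$, then use the two $H^1$-vanishing hypotheses in the long exact sequence to force $h^0(N_{F|X}) \ge 1 > 0 = h^1(N_{F|X})$, contradicting Lemma~\ref{lemma:fiber_normal_inequality}. The only cosmetic difference is that the paper justifies the parenthetical ``e.g.'' by citing a vanishing theorem valid on normal varieties (so rational singularities are not needed for that step), whereas you invoke Kawamata--Viehweg; either works in the applications.
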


\begin{proof}
Since $F \embed X$ is a regular embedding $N_{F|X}$ is a vector bundle. Suppose $N_{F|X}^\vee$ has a nonvanishing section or equivalently a short exact sequence,
\begin{center}
\begin{tikzcd}
0 \arrow[r] & \L \arrow[r] & N_{F|X} \arrow[r] & \struct{F} \arrow[r] & 0
\end{tikzcd}
\end{center}
where $\L = \det N_{F|X}$ is anti-ample by assumption. Therefore the long exact sequence gives
\begin{center}
\begin{tikzcd}
0 \arrow[r] & H^0(F, \L) \arrow[r] & H^0(F, N_{F|X}) \arrow[r] & H^0(F, \struct{F}) \connectingmap{lld}
\\
& H^1(F, \L) \arrow[r] & H^1(F, N_{F|X}) \arrow[r] & H^1(F, \struct{F})
\end{tikzcd}
\end{center}
By assumption $H^0(F, \struct{F}) = 0$ and $H^1(F, \struct{F}) = 0$. Since $\L$ is anti-ample and $F$ is normal $H^1(F, \L) = 0$ by the main result of \cite{DD89}.
Therefore $h^1(F, N_{F|X}) = 0$ and $h^0(F, N_{F|X}) > 1$ contradicting Lemma~\ref{lemma:fiber_normal_inequality}.
\end{proof}

The following lemma will also be used repeatedly in the subsequent discussion:

\begin{lemma} \label{lemma:blowup_nowhere_vanishing}
Let $Y$ be a variety and $f : X \to Y$ be the blowup of a smooth subscheme $Z \embed Y$ such that $X$ is smooth. Let $\omega$ be a global $1$-form on $Y$. Then $f^* \omega$ is nonvanishing if and only if $\omega|_Z$ and $\omega|_{Y \setminus Z}$ are both nonvanishing.
\end{lemma}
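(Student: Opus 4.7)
The plan is to reduce everything to a pointwise computation at a closed point $p$ of the exceptional divisor $E \subset X$, since $f$ is an isomorphism over $Y \setminus Z$ and the statement $f^*\omega|_{X \setminus E} \neq 0$ is immediately equivalent to $\omega|_{Y \setminus Z}\neq 0$. The assertion about $E$ is then the content of the lemma, and we may assume $\mathrm{codim}_Y Z \ge 2$ (otherwise $f$ is an isomorphism).

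The main input is a description of the image of $df_p : T_p X \to T_z Y$ at a point $p \in E$ with $z = f(p) \in Z$. I would compute this in local coordinates: choose coordinates $x_1, \dots, x_n$ on $Y$ near $z$ with $Z = V(x_1, \dots, x_r)$, and pass to the standard affine chart on $X$ with coordinates $y_1, y_2, \dots, y_r, x_{r+1}, \dots, x_n$ in which $f$ is given by $x_1 = y_1$ and $x_i = y_1 y_i$ for $2 \le i \le r$. On the slice $y_1 = 0$ of $E$, the Jacobian shows that $\mathrm{Im}(df_p) = T_z Z \oplus \ell_p$ inside $T_z Y$, where $\ell_p \subset N_{z, Z/Y}$ is the line corresponding to the point $p$ of the fiber $E_z \cong \mathbb{P}(N_{z, Z/Y})$. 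This is the geometric content underlying the projective bundle description of $E \to Z$.

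With this in hand, both directions follow by unwinding the vanishing condition $(f^*\omega)(p) = \omega(z) \circ df_p = 0$, i.e., $\omega(z)$ vanishes on $T_z Z \oplus \ell_p$. For the ``if'' direction, the hypothesis $\omega|_Z(z) \neq 0$ says exactly that $\omega(z)$ is nonzero on $T_z Z$, so a fortiori nonzero on any enlargement, giving $(f^*\omega)(p) \neq 0$ for every $p \in E_z$. For the ``only if'' direction, suppose $\omega|_Z$ vanishes at some $z \in Z$, so $\omega(z)$ descends to a linear functional $\bar\omega$ on the normal space $N_{z, Z/Y}$; since $\dim N_{z, Z/Y} \ge 2$, the kernel of $\bar\omega$ is at least one-dimensional and therefore contains some line $\ell \subset N_{z, Z/Y}$. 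The corresponding point $p \in E_z$ satisfies $\ell_p = \ell$, hence $\omega(z)$ vanishes on $T_z Z \oplus \ell_p$, giving $(f^*\omega)(p) = 0$.

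The only thing to be careful about is the image-of-differential computation on $E$; once that is in place, the rest is just bookkeeping in linear algebra. One could alternatively phrase the argument sheaf-theoretically using the conormal sequence $0 \to N_{E|X}^\vee \to \Omega_X|_E \to \Omega_E \to 0$ together with the surjection $f^*\Omega_Y \twoheadrightarrow \Omega_X|_E / N_{E|X}^\vee$ coming from the factorization $E \to Z \hookrightarrow Y$, but the coordinate computation is both shorter and more transparent.
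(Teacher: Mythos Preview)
Your argument is correct and is essentially dual to the paper's. The paper takes precisely the sheaf-theoretic route you mention at the end: it uses the conormal sequence $0 \to N_{E|X}^\vee \to \Omega_X|_E \to \Omega_E \to 0$, observes that the image of $(f^*\omega)|_E$ in $\Omega_E$ is $f^*(\omega|_Z)$, and then argues that if $\omega|_Z(z)=0$ the restriction $(f^*\omega)|_{E_z}$ lifts to a section of $N^\vee_{E|X}|_{E_z}\cong\cO_{E_z}(1)$, which must vanish somewhere because $\cO_E(-E)$ is $f$-ample. Your linear-algebra step---a functional on $N_{z,Z/Y}$ has nontrivial kernel since $\dim N_{z,Z/Y}\ge 2$---is exactly this statement unwound at a point, so the two arguments are the same idea in different packaging. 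Your presentation has the advantage of making the ``if'' direction explicit, which the paper leaves implicit.

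One small caveat: your coordinate patch presupposes $Y$ smooth near $Z$. The lemma as stated only assumes $X$ and $Z$ smooth, and the paper invokes it in Theorem~\ref{thm:contractions}(2) where $Y$ is merely $\Q$-factorial terminal. The conormal-sequence argument goes through uniformly in that setting (it only uses smoothness of $X$ and of $E\hookrightarrow X$), whereas your coordinate description of $\operatorname{Im}(df_p)$ would need an extra word in the singular case.
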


\begin{proof}
We apply the argument of Proposition~\ref{prop:conormal_vanishing} upstairs. Let $E$ be the exceptional divisor of $f$ and consider the exact sequence,
\[ 0 \to N^{\vee}_{E|X} \to \Omega_X |_E \to \Omega_E \to 0 \]
which is exact on the left since $E \embed X$ is regularly embedded. Indeed, $X$ is smooth and $E \embed X$ is a Cartier divisor so $N^{\vee}_{E|X} = \struct{E}(-E)$ is a line bundle hence the map to the vector bundle $\Omega_X|_E$ is injective. The $1$-form $f^* \omega \in H^0(X, \Omega_X)$ restricted to $H^0(E, \Omega_X | E)$ has no zero anywhere on $E$. Furthermore, its image $(f^* \omega)|_E$ in $H^0(E, \Omega_E)$ equals $f^* (\omega|_Z)$ and hence is ``constant'' along the fiber. In particular, if $z \in Z$ is a point at which $\omega|_Z$ is zero then $f^* \omega$ restricted to $H^0(E_z, \Omega_X |_{E_Z})$ must lie in $H^0(E_z, N^{\vee}_{E|X}|_{E_z})$ but $N^{\vee}_{E|X} = \struct{E}(-E)$ is $f$-ample so any section in $H^0(E_z, N^{\vee}_{E|X}|_{E_z})$ of its restriction to a fiber must admit a zero. Hence there exists a point $x \in E_z$ so that $f^* \omega$ is zero at $x$. Therefore, the zero locus of $\omega|_Z$ is empty.
\end{proof}

Let $X$ be a smooth projective $4$-fold containing a surface $F' \subset X$. For each pair $(F', \struct{F'}(-K_X))$ among the list of surfaces in Proposition~\ref{prop:isolated_fibers} if $F' \embed X$ is regularly embedded we can analyze $\det N_{F'|X} = \struct{F'}(-K_X + K_{F'})$ as follows:

\begin{enumerate}
    \item $(\P^2, \struct{\P^2}(e))$ for $e = 1,2$ then $F'$ is regular so $F' \embed X$ is a regular embedding and $\det N_{F'|X} = \struct{\P^2}(e - 3)$ is anti-ample so Corollary~\ref{cor:conormal_nonvanishing} applies.
    \item $(\FF_r, \struct{\FF_r}(s_0 + k f))$ for $k \ge r+1$ and $r \ge 0$ then $F'$ is regular so $F' \embed X$ is a regular embedding and $\det N_{F'|X} = \struct{\FF_r}(-s_0 + (k - r - 2) f)$ is not anti-ample. However, if $\pi : \FF_r \to \P^1$ is the natural projection, it does satisfy $H^1(\FF_r, \struct{\FF_r}(-s_0 + (k - r - 2) f)) = H^1(\P^1, \RR \pi_* \struct{\FF_r}(-s_0 + (k - r - 2) f)) = 0$ since on fibers $\struct{\FF_r}(-s_0 + (k - r - 1) f)$ restricts to $\struct{\P^1}(-1)$ which has no cohomology. Therefore Corollary~\ref{cor:conormal_nonvanishing} applies.
    \item $(\SS_r, \struct{\SS_r}(1))$ with $r \ge 2$ then unfortunately $\SS_r$ is not lci and hence not regularly embedded for $r > 2$. However, for $r = 2$ it is lci and $\det N_{F'|X} = \struct{\SS_r}(-3)$ is anti-ample so Corollary~\ref{cor:conormal_nonvanishing} applies.
\end{enumerate}

Therefore, we see that no component on the list of surfaces in Proposition~\ref{prop:isolated_fibers} can appear except possible $\SS_r$ for $r \ge 3$. We make one further observation before proving the main result: if $f : X \to Y$ is a Fano-Mori contraction then its fibers are covered by rational curves by \cite{MM86} (since all contracted curves satisfy $K_X \cdot C < 0$) and hence are contracted under the Albanese map. Therefore by properness of $f : X \to Y$ there is a diagram:
\begin{center}
\begin{tikzcd}
X \arrow[r, "a"] \arrow[d] & \Alb_X
\\
Y \arrow[ru]
\end{tikzcd}
\end{center}
The upshot is that $\omega = a^* \omega_{\Alb_X}$ pulls back through $Y$ meaning there is a $1$-form (an actual global K\"{a}hler differential not just a reflexive $1$-form) $\omega_Y \in H^0(Y, \Omega_Y)$ so that $\omega = f^* \omega_Y$. This implies that $\omega_Y$ is nowhere vanishing (in the sense of being nonzero in $\Omega_{Y,y} = \m_y / \m_y^2$ at each $y \in Y$) although we are mostly interested in this property when we can show that $Y$ is smooth. Now we are ready to prove the main result.

\begin{proof}[Proof of Theorem~\ref{thm:contractions}]
We perform casework on the type $(n,m)$ of the contraction $f : X \to Y$ utilizing the classification of extremal contractions of 4-folds and analyzing particular surfaces $E \subset X$ that appear in the classification. Note that since $X$ is regular, as long as $E$ is lci the embedding $E \embed X$ is regular. 
\par
By Kawamata's result (Theorem~\ref{thm:small_contraction}) if $f$ is small then $X$ contains a surface $E \cong \P^2$ with $N_{E|X} \cong \struct{\P^2}(-1)^{\oplus 2}$ hence its conormal bundle does not admit a nowhere vanishing $1$-form. Since $E$ is smooth and rational it carries no global $1$-forms. This contradicts the above discussion. 
\par 
We now consider the remaining types of extremal contractions:
\begin{enumerate}
    \item[Type $(4,0)$] implies that $X$ is Fano which is impossible since $X$ carries a $1$-form.
    \item[Type $(4,1)$] the base must be a normal curve with a nowhere vanishing $1$-form so $Y \cong E$ is an elliptic curve and $f : X \to E$ is a fibration in Fano 3-folds. Since the $1$-form pulls back to $\omega$ which has no zeros on $X$ and $X$ is smooth we see that $f$ is smooth.
    \item[Type $(4, 2)$] since the Albanese factors through $X \to Y$ by \cite[Lemma 2.3]{HS21(1)} every fiber of $X \to Y$ must be 2-dimensional otherwise some divisor would get contracted. Therefore, by \cite[Lemma 3.10(iii)]{Cas08} $Y$ is a smooth surface and $f : X \to Y$ is an equidimensional -- hence flat by miracle flatness -- de Pezzo fibration.
    \item[Type $(4,3)$] Ando proves \cite[Theorem 3.1]{And85} that if every fiber of $f : X \to Y$ has dimension $\le 1$ then $Y$ is smooth and $f$ is a flat conic bundle. Furthermore, as discussed above, any fiber of dimension $> 1$ must be an isolated $2$-dimensional fiber. By Proposition~\ref{prop:isolated_fibers} each isolated $2$-dimensional fiber $F$ must contain as a reduced irreducible component $F' \subset F$ so that the pair $(F', \struct{F'}(-K_X))$ appears on the list:
    \begin{enumerate}
        \item $(\P^2, \struct{\P^2}(e))$ with $e = 1,2$
        \item $(\FF_r, \struct{\FF_r}(s_0 + k f))$ with $r \ge r + 1$ and $r \ge 0$
        \item $(\SS_r, \struct{\SS_r}(1))$ with $r \ge 2$.
    \end{enumerate}
    we already saw that Corollary~\ref{cor:conormal_nonvanishing} applies to all except $\SS_r$ with $r \ge 3$ and these surfaces are regular except for $\SS_2$ which has an $A_1$-singularity and hence $\Omega_{F'}$ is torsion-free. Therefore, $E = F'$ satisfies the assumptions of Proposition~\ref{cor:conormal_vanishing} contradicting the conclusion of Corollary~\ref{cor:conormal_nonvanishing}. For the case $E = \SS_r$ note that the embedding dimension of the cone point $p \in \SS_r$ is $r+1$. Since $X$ is a regular $4$-fold, $r \le 3$ and for $r = 3$ the cotangent space of $E$ at $p$ spans $\Omega_{X} \to \kappa(p)$. However, $\omega = f^* \omega_Y$ so $\omega(p)$ is zero in the cotangent space of any closed subscheme of the fiber and thus $\omega(p) \in \Omega_{X} \ot \kappa(p)$ is zero contradicting the assumption that $\omega$ is nowhere vanishing. Hence this rules out $\SS_r$ for $r \ge 3$. We have shown there cannot be a $2$-dimensional fiber and therefore $f : X \to Y$ is everywhere a conic bundle and $Y$ is smooth. 
    \item[Type $(3,0)$] is ruled out by \cite[Lemma 2.3]{HS21(1)} applied to the Albanese $X \to \Alb_X$.
    \item[Type $(3,1)$] by the main theorem of \cite{Tak99} $f : X \to Y$ is the blowup of a regularly embedded smooth curve $C \subset Y$ and $Y$ is a terminal $\Q$-factorial $4$-fold. Now $\omega = f^* \omega_Y$ and I claim $\omega_Y|_C$ is nowhere vanishing. Indeed, because $f : X \to Y$ is a resolution of singularities with the structure of a blowup along the smooth center $C$, Lemma~\ref{lemma:blowup_nowhere_vanishing} applies. Therefore, $\omega_Y|_C$ has no zero and hence $C$ is an elliptic curve. 
    \item[Type $(3,2)$] Ando proves \cite[Theorem 2.3] {And85} that if every fiber of $f : X \to Y$ has dimension $\le 1$ then $Y$ is smooth and $f$ is the blowup of $Y$ along a smooth surface $S \subset Y$. By the same logic as in type $(4,3)$ we rule out the existence of any $2$-dimensional fiber so Ando's result applies. Furthermore, since $\omega = f^* \omega_Y$ is a nowhere vanishing $1$-form we conclude that is $\omega_Y |_S$ is also nowhere vanishing by Lemma~\ref{lemma:blowup_nowhere_vanishing}.
\end{enumerate}
\end{proof}

\section{Smoothness in codimension $1$}

In this section, we use a purity result for the singular locus of a morphism between smooth varieties to show that flat morphisms to a simple abelian variety pulling back a $1$-form without introducing zeros must be smooth away from codimension $1$ on the base. This uses a weak form of purity for maps between smooth varieties.

\begin{lemma} \label{lem:purity}
Let $f : X \to Y$ be a dominant morphism of smooth varieties with relative dimension $r = \dim{X} - \dim{Y}$. Then each irreducible component of the singular locus of $f$ has codmension $\le r + 1$ in $X$.
\end{lemma}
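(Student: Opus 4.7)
The plan is to realize the singular locus of $f$ as the degeneracy locus of a map of vector bundles and then invoke the classical Macaulay--Eagon--Northcott height bound for ideals of minors. Set $n = \dim X$ and $m = \dim Y$, so $r = n - m$. Since $X$ and $Y$ are smooth, $\Omega_X$ and $\Omega_Y$ are locally free of ranks $n$ and $m$, and the differential of $f$ assembles into a morphism of vector bundles
\[
\phi : f^* \Omega_Y \longrightarrow \Omega_X.
\]
Because $f$ is dominant and we work in characteristic $0$, generic smoothness implies that $\phi$ has generic rank $m$; moreover $f$ is smooth at $x$ precisely when $\phi$ attains full rank $m$ at $x$. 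Hence the singular locus of $f$ coincides set-theoretically with the degeneracy locus $D_{m-1}(\phi) = \{ x \in X : \mathrm{rank}\, \phi(x) \le m - 1 \}$, which on any affine open $U \subset X$ trivializing both bundles is cut out by the ideal $I_m(M) \subset \Gamma(U, \mathcal{O}_X)$ of $m \times m$ minors of the $n \times m$ Jacobian matrix $M$ representing $\phi$.

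Let $Z$ be an irreducible component of the singular locus with generic point $\xi$. Since $X$ is smooth, $\mathcal{O}_{X, \xi}$ is a regular local ring of dimension $\mathrm{codim}(Z, X)$, and the localized ideal $I_m(M)\, \mathcal{O}_{X, \xi}$ is proper with $\mathfrak{m}_\xi$ as its unique minimal prime, so
\[
\mathrm{height}\bigl(I_m(M)\, \mathcal{O}_{X, \xi}\bigr) \;=\; \dim \mathcal{O}_{X, \xi} \;=\; \mathrm{codim}(Z, X).
\]
I would then invoke the classical Macaulay bound: if $N$ is an $a \times b$ matrix over a Noetherian ring and $I_k(N)$ is a proper ideal, then $\mathrm{height}(I_k(N)) \le (a - k + 1)(b - k + 1)$. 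Applied with $(a, b, k) = (n, m, m)$ this yields
\[
\mathrm{codim}(Z, X) \;\le\; (n - m + 1) \cdot 1 \;=\; r + 1,
\]
as required.

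I do not foresee any serious obstacle; the statement is essentially a direct application of the determinantal height bound once the singular locus is realized as the vanishing of the ideal of maximal minors of the Jacobian. The only technical care is in the localization step, which is what converts the (a priori global) height bound on the ideal of minors into a codimension bound on each individual irreducible component of the singular locus.
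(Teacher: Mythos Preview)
Your proof is correct and follows essentially the same approach as the paper: both identify the singular locus as the vanishing of the maximal minors of the Jacobian, then use the determinantal height bound. The paper's proof is terse---it simply says ``reduce \'etale-locally to a map of affine spaces and observe the singular locus is cut out by the minors of the Jacobian,'' citing Kaledin for a more general statement---whereas you spell out explicitly the degeneracy-locus formulation and the Macaulay/Eagon--Northcott inequality, including the localization step that gives the per-component bound. There is no substantive difference in strategy.
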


\begin{proof}
This follows immediately from the Jacobian criterion. Using the \etale local structure of smooth varieties we can reduce to a map of affine spaces. Then the singular locus is cut out by the $r \times r$ minors of the Jacobian matrix. A much more general result is proved in the main theorem of \cite{Kal13}.
\end{proof}

\begin{prop} \label{prop:smooth_in_codim_1}
Let $f : X \to A$ be a dominant morphism from a smooth proper variety $X$ to a simple abelian variety $A$.  Suppose that there is a $1$-form $\omega \in H^0(A, \Omega_A^1)$ such that $f^* \omega$ has no zeros. Then $f$ is smooth away from codimension $1$ on the base. 
\end{prop}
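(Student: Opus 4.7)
The plan is a contradiction argument combining Lemma~\ref{lem:purity} with Ueno's theorem on subvarieties of simple abelian varieties and the Popa--Schnell vanishing theorem \cite{PS14}. Generic smoothness (automatic for dominant morphisms of smooth varieties in characteristic zero) shows $f$ is smooth over a dense open of $A$. By Lemma~\ref{lem:purity}, every irreducible component $Z \subset \mathrm{Sing}(f)$ satisfies $\mathrm{codim}_X Z \leq r+1 = \dim X - \dim A + 1$, so $\dim Z \geq \dim A - 1$. The goal is to show $\dim f(Z) \leq \dim A - 2$; arguing by contradiction, suppose $W := \overline{f(Z)}$ is an irreducible divisor in $A$.

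Because $A$ is simple and $W \neq A$, the stabilizer $\mathrm{Stab}_A(W) \subset A$ is an abelian subvariety distinct from $A$, hence trivial; Ueno's theorem then gives that $W$, and any smooth model $\pi : \tilde W \to W$, is of general type. The core of the argument is the following claim: \emph{the restriction $\omega|_W \in H^0(W^{\mathrm{sm}}, \Omega^1_W)$, obtained via the natural surjection $\Omega^1_A|_W \onto \Omega^1_W$, is nowhere vanishing on the smooth locus $W^{\mathrm{sm}}$.} Granted this, $\pi^{\ast}(\omega|_W)$ extends (modulo some care at the singularities of $W$) to a nowhere-vanishing holomorphic $1$-form on $\tilde W$, directly contradicting Popa--Schnell applied to the general-type variety $\tilde W$.

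To prove the claim, suppose for contradiction that $\omega|_W$ vanishes at some $w \in W^{\mathrm{sm}}$; then $T_wW$ and $\ker \omega|_w$ are both hyperplanes in $T_wA$ and the former is contained in the latter, so $T_wW = \ker \omega|_w$. Pick $z \in Z$ above $w$ at a good point, meaning $Z$ is smooth at $z$ and the dominant morphism $f|_Z : Z \to W$ is smooth at $z$. Then $df_z(T_zZ) = T_wW$ by surjectivity of $d(f|_Z)_z$, while the hypothesis $(f^{\ast}\omega)_z \neq 0$ forces $\mathrm{im}(df_z) \not\subset \ker \omega|_w = T_wW$. Together these give $\mathrm{im}(df_z) = T_wA$, so $f$ is smooth at $z$, contradicting $z \in Z \subset \mathrm{Sing}(f)$.

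The principal obstacle is ensuring that the good point $z \in Z$ can be found above an actual zero of $\omega|_W$; for the given fixed $\omega$ it is a priori possible that every point of the zero locus $G^{-1}([\omega])$ (the fiber over $[\omega]$ of the Gauss map $G : W^{\mathrm{sm}} \to \mathbb{P}(H^0(A, \Omega^1_A)^\vee)$, nonempty and generically finite by a theorem of Ran for subvarieties with trivial stabilizer) lies in $W \setminus f(Z_0)$, where $Z_0$ is the good locus of $Z$. Overcoming this likely requires a case analysis on the components $D_i$ of the Cartier pullback $f^{\ast} W = \sum m_i D_i$: ramification components with $m_i \geq 2$ are automatically contained in $\mathrm{Sing}(f)$, and a direct local computation along them shows that $\omega|_W$ is forced to be nowhere vanishing on $W^{\mathrm{sm}}$; reduced components $m_i = 1$ require a recursive argument applied to $f|_{D_i} : D_i \to W$ using the same circle of ideas. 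Handling the singularities of $W$ at the Popa--Schnell step is a secondary but manageable technicality.
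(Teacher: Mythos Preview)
Your tangent-space computation is exactly the core of the paper's argument: at any point $y$ in the smooth locus of $W = \overline{f(Z)}$ lying under a ``good'' point $z \in Z$ (where $Z$ is smooth and $f|_Z$ is smooth), one has $\mathrm{im}(df_z) = T_yW$, and since $(f^*\omega)(z) \neq 0$ the form $\omega$ cannot vanish on $T_yW$. So you and the paper agree completely up to this point.

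The gap you identify is genuine for your chosen endgame, but your proposed repair (case analysis on the components of $f^*W$, a recursive argument on reduced components) is vague and almost certainly harder than necessary. The paper sidesteps the problem entirely with two moves you are missing. First, since $X$ is proper, the condition ``$f^*\omega$ has no zeros'' is open in $H^0(A,\Omega^1_A)$; hence the \emph{generic} $1$-form on $A$ also pulls back without zeros, and the tangent-space argument applies to it just as well. Second, rather than invoking Popa--Schnell on a resolution of $W$ (which would require the restricted form to be nowhere vanishing on \emph{all} of $W^{\mathrm{sm}}$, and then to extend across the exceptional locus without zeros---your ``secondary technicality'' is not obviously minor), the paper invokes \cite[Proposition~3.3]{DHL21}: a subvariety of an abelian variety on which the generic $1$-form restricts without zeros is fibered by tori. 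This needs only the conclusion you already have on the dense open $U \subset W$ of good images, and immediately contradicts simplicity of $A$.

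In short: replace ``Popa--Schnell on $\tilde W$'' by ``openness in $\omega$ $+$ \cite[Proposition~3.3]{DHL21}'' and the gap vanishes. Your Ueno/general-type detour and the analysis of $f^*W$ are then unnecessary.
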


\begin{proof}
By purity, Lemma~\ref{lem:purity}, the singular locus $Z \subset X$ has each component of codimension at most $n+1$. By generic smoothness, no component of $Z$ dominates $A$. If $f$ is not smooth away from codimension $1$ then there is some irreducible component $Z' \subset Z$ whose image in $A$ is a divisor $D$. By generic smoothness of $Z' \to D$, there is an open set $U \subset D$ which is smooth and such that the smooth locus of $Z'|_U \to U$ intersects every fiber. This means, for each $y \in U$ there exists $x \in Z'|_U$ so that $\d{f} : T_x Z' \onto T_y D$ is surjective. However, if we consider the diagram,
\begin{center}
    \begin{tikzcd}
    T_x Z' \arrow[r, hook] \arrow[d, "\d{f}", two heads] & T_x X \arrow[d, "\d{f}"]
    \\
    T_y D \arrow[r, hook] & T_y A    
    \end{tikzcd}
\end{center}
By the definition of $Z$, the tangent map $\d{f} : T_x X \to T_y A$ is not surjective. However, $T_y D \embed T_y A$ is codimension $1$ so by the surjectivity of $T_x Z' \to T_y D$ it holds that $\im{\d{f}} = T_y D$. 
\par 
On the other hand, our form $f^* \omega$ must be nonzero at $x \in X$. This means $\omega$ cannot vanish on $T_y D \subset T_y A$ since $\d{f}$ sends every tangent vector into $T_y D$. Furthermore, since $X$ is proper, a $1$-form being nowhere vanishing is an open condition on $f^* H^0(A, \Omega_A) \subset H^0(X, \Omega_X)$. Hence the generic global $1$-form on $A$ satisfies the same property, that $\omega|_U$ has no zeros. This implies by \cite[Proposition 3.3]{DHL21} that $g(Z')$ is fibered by Tori which is impossible since it is a divisor in a simple abelian variety. 
\end{proof}

\begin{cor} \label{cor:finite_implies_etale}
If $f : X \to A$ is a generically finite map from a smooth projective variety to a simple abelian variety and there is a $1$-form $\omega \in H^0(A, \Omega_A)$ so that $f^* \omega$ has no zeros then $f$ is a modification followed by an isogeny. If $\dim{X} \le 4$ then the modification is an isomorphism.
\end{cor}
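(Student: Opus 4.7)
For the first claim, I would combine Proposition~\ref{prop:smooth_in_codim_1} with Stein factorization and Zariski--Nagata purity. Writing $f = g \circ h$ for the Stein factorization, with $h : X \to X'$ having connected fibers and $g : X' \to A$ finite, the generic finiteness of $f$ forces $h$ birational and $X'$ normal. Over the étale locus $V \subset A$ (whose complement has codimension $\geq 2$ by Proposition~\ref{prop:smooth_in_codim_1}), $h$ is an isomorphism and $g$ is étale, so $g$ is étale in codimension $1$ on $A$. As $g$ is a finite morphism from a normal variety to a smooth one, the Zariski--Nagata purity of the branch locus forces that branch locus to be empty, so $g$ is a finite étale cover. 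Hence $X'$ is an abelian variety and $g$ is an isogeny.

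For the second claim, I would assume $\dim X \leq 4$ and that $h$ is not an isomorphism, and derive a contradiction by splitting into two cases according to whether $h$ has an exceptional divisor or is small. In the small case, the ramification divisor $R_h = K_X - h^* K_{X'}$ vanishes (being effective and supported on exceptional divisors, which are absent), so $K_X = 0$; by birational invariance of the Albanese, $\Alb_X = X'$, and the Albanese map $a : X \to X'$ has vanishing ramification divisor as well. Lemma~\ref{lem:purity} then forces $a$ to be étale. Since $h$ factors through the Albanese as $h = \phi \circ a$ for some endomorphism $\phi$ of $X'$, and $h$ has generic degree $1$, both $\phi$ and $a$ have degree $1$, so $a$ is an étale morphism of degree $1$, i.e., an isomorphism, and hence so is $h$.

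If $h$ has an exceptional divisor, then the argument in the excerpt's footnote produces a curve $C \subset X$ contracted by $h$ with $K_X \cdot C < 0$, giving a $K_X$-negative extremal ray in the relative Mori cone $\overline{NE}(X/X')$. Its contraction $c : X \to Y$ factors $h$ as $X \to Y \to X'$. By Theorem~\ref{thm:contractions} (in dimension $4$; Mori's classical classification in dimension $3$; trivially in dimension $2$), fiber-type contractions are ruled out by dimension count (since $Y \to X'$ birational forces $\dim Y = \dim X$), so $c$ is a blowup at a smooth point (only in dim~$3$), a smooth curve, or (only in dim~$4$) a smooth surface.

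The point case contradicts the argument of Proposition~\ref{prop:conormal_vanishing} (which extends to dimension~$3$) since the conormal bundle $\mathcal{O}_{\mathbb{P}^{n-1}}(1)$ admits no nowhere-vanishing section. For the curve case, Lemma~\ref{lemma:blowup_nowhere_vanishing} forces the center to be an elliptic curve whose image in $X'$ is neither a point (where $\omega$ would then vanish along the center) nor a curve (a curve in the simple abelian variety $X'$ of dimension $\geq 2$ generates $X'$, forcing genus $\geq \dim X' \geq 2$, incompatible with a nontrivial map from an elliptic curve). The surface case, the main obstacle, involves a smooth surface $S \subset Y$ with $\omega_Y|_S$ nowhere vanishing whose image $S' \subset X'$ must be $2$-dimensional (point and curve images being ruled out analogously). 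Using openness of the nowhere-vanishing condition on $f^* H^0(A, \Omega_A)$ (as in the proof of Proposition~\ref{prop:smooth_in_codim_1}), I would then apply \cite[Proposition~3.3]{DHL21} to conclude that $S'$ is fibered by tori, which is impossible in the simple abelian $4$-fold $X'$.
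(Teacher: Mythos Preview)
Your approach matches the paper's. The first part (Stein factorization plus Zariski--Nagata purity via Proposition~\ref{prop:smooth_in_codim_1}) is the same; for the second, the paper argues more tersely by invoking Theorem~\ref{thm:contractions} (respectively \cite[Proposition~3.1]{HS21(1)} for threefolds, Castelnuovo for surfaces) to factor $h$ as a sequence of blowups at centers on which $\omega$ restricts to a nonvanishing form, and then citing \cite[Proposition~3.3]{DHL21} to see that no proper subvariety of the simple abelian variety $X'$ can carry such a form. Your step-by-step case analysis reaches the same contradiction by the same mechanism.

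Two minor points. Your ``small case'' is vacuous: since $X'$ is smooth (hence $\Q$-factorial), a proper birational morphism from smooth $X$ to $X'$ that is not an isomorphism always has exceptional locus of pure codimension one, so no separate argument is needed there. In dimension $3$, your appeal to Mori's classification lists only blowups of a smooth point or smooth curve, omitting the divisorial contractions of types $E_3$--$E_5$ to a singular point (with exceptional divisor $\P^1 \times \P^1$, a quadric cone, or $\P^2$ with normal bundle $\cO(-2)$); these are disposed of by the same conormal-bundle argument as your point case, but you should either treat them explicitly or, as the paper does, defer to \cite[Proposition~3.1]{HS21(1)}, which already excludes them under the nonvanishing hypothesis.
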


\begin{proof}
The previous result shows that the image of the branch locus has codimension $\ge 2$ on the base. In particular, $f$ is surjective. Now we apply Stein factorization to $f$ in order to obtain maps $X \to X' \to Y$ where $X' \to Y$ is finite. Since $X'$ is normal and $f' : X' \to Y$ is a finite surjective map onto a smooth variety, by Zariski-Nagata purity it is \etale if and only if it is unramified in codimension $1$. Note that if $f : X \to Y$ is smooth (i.e.\ \etale) at $x \in X$ and $x$ has image $x' \in X$ inside the smooth locus of $X'$ then $X' \to Y$ is \etale at $x'$ by a simple argument on tangent spaces. But $X'$ is regular in codimensional $1$ so the divisorial part of the branch locus $D$ of $X' \to Y$ is contained in the image of the singular locus of $f$. Since $f' : X' \to Y$ is finite, $D$ is not contracted onto $Y$ but is contained in the image of the singular locus of $f$. Since $f$ is smooth away from codimension $1$ on the base, this proves that $D$ is empty. An application of Zariski-Nagata purity shows that $f'$ is \etale.
\par
Furthermore, if $\dim{X} \le 4$ then we may apply Theorem~\ref{thm:contractions} or the analogous results for $3$-folds found in \cite[Proposition 3.1]{HS21(1)} (and the Castellounovo contraction theorem for surfaces) to show that $X \to X'$ is factored as a sequence of blowups at subvarieties on which $\omega$ restricts to a nonvanishing $1$-form. Since $X'$ is simple, there are no such subvarieties by, for example, \cite[Proposition 3.3]{DHL21}.
\end{proof}

\begin{rmk} \label{rmk:flat_case}
Suppose $f : X \to A$ is flat. Each component of the singular locus $Z \subset \mathrm{Sing}(f)$ has codimension $\le \dim{X} - \dim{A} + 1$. Hence if $f|_Z$ is generically finite then its image is a divisor. Therefore, if $f$ is smooth in codimension $1$ then each component $Z$ must have positive fiber dimension. Since $f$ is flat, $X_s \cap \mathrm{Sing}(f) = \mathrm{Sing}(X_s)$ so each component of the singular locus of each fiber is $\ge 1$. In particular, if $f : X \to A$ is a flat map from a $4$-fold to an abelian surface all of whose fibers are normal and reduced then smooth in codimension $1$ implies smooth. 
\end{rmk}

\begin{cor}
With the notation of Proposition~\ref{prop:smooth_in_codim_1}, if $X$ (or equivalently the general fiber $F$ of $X \to A$) admits a good minimal model then there is an isogeny $A' \to A$ such that $X \times_A A' \birat F \times A'$. 
\end{cor}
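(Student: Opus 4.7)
The strategy is to lift to the Albanese, invoke the ``isogeny'' version of the structure theorem from \cite{Church24}, and descend back to $A$ via the product decomposition afforded by the simplicity of $A$.

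Since $f$ is dominant and $A$ is abelian, $f$ factors through the Albanese map as $X \xrightarrow{\alpha} \Alb_X \xrightarrow{\phi} A$, with $\phi$ a surjective morphism of abelian varieties. Because $A$ is simple, Poincar\'{e} complete reducibility yields an abelian subvariety $A_0 \subset \Alb_X$ mapping isogenously to $A$ via $\phi$, together with the complementary subvariety $C = (\ker \phi)^0$ and an isogeny $C \times A_0 \to \Alb_X$. Absorbing the isogeny $A_0 \to A$ into the eventual $A' \to A$, I may assume that $\Alb_X$ is isogenous to $C \times A$ compatibly with $\phi$.

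Next, I apply the main result of \cite{Church24}: since $X$ has a good minimal model and a nowhere vanishing $1$-form, the Albanese fibration $\alpha$ becomes birationally trivial after pulling back along some isogeny $B' \to \Alb_X$, i.e.
\[
X \times_{\Alb_X} B' \;\birat\; F_\alpha \times B',
\]
where $F_\alpha$ denotes the general fiber of $\alpha$. By replacing $B'$ with a further isogeny cover if necessary, I arrange $B' = C' \times A'$ with $C' \to C$ and $A' \to A$ isogenies, and $B' \to \Alb_X$ factoring as the product isogeny $C' \times A' \to C \times A$ followed by the isogeny $C \times A \to \Alb_X$.

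A base-change identity then gives
\[
X \times_A A' \;\cong\; X \times_{\Alb_X} (\Alb_X \times_A A') \;\birat\; X \times_{\Alb_X} (C \times A'),
\]
and since $X \times_{\Alb_X} (C \times A')$ is the quotient of $X \times_{\Alb_X} B' \birat F_\alpha \times C' \times A'$ by the finite group $\ker(C' \to C)$ acting on the $C'$-factor, I obtain $X \times_A A' \birat F_\alpha \times C \times A'$. The generic fiber of $X \times_A A' \to A'$ equals the general fiber $F$ of $X \to A$ by base change, while the right-hand side exhibits this generic fiber as $F_\alpha \times C$; this yields $F \birat F_\alpha \times C$ and hence the desired birational equivalence $X \times_A A' \birat F \times A'$.

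The main technical obstacle is compatibility: the isogeny $B' \to \Alb_X$ furnished by \cite{Church24} need not a priori respect the Poincar\'{e} decomposition $\Alb_X \sim C \times A$, so a further isogeny refinement is needed, and one must check that the birational trivialization on $X \times_{\Alb_X} B'$ descends cleanly through the quotient by $\ker(C' \to C)$. The final identification $F \birat F_\alpha \times C$ is then a direct computation using the product model of $X$ over $B'$.
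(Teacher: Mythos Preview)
Your approach bypasses Proposition~\ref{prop:smooth_in_codim_1} entirely, which is precisely the content this corollary is meant to exploit; the paper's proof is a one-liner combining that proposition with \cite[Theorem~C]{Church24} applied directly to $f : X \to A$. More seriously, your central invocation of \cite{Church24} is not what that paper proves and is in fact false. The results there produce a birational product structure over \emph{some} abelian quotient of $\Alb_X$, not over $\Alb_X$ itself. The claim ``$X \times_{\Alb_X} B' \birat F_\alpha \times B'$ for some isogeny $B' \to \Alb_X$'' fails already for $X = C \times A$ with $C$ a genus-$2$ curve and $A$ a simple abelian surface: here $\Alb_X = J(C) \times A$, the Albanese map is the Abel--Jacobi embedding on the first factor, so $\alpha$ is not surjective and its general fiber $F_\alpha$ is a point. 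Then for any isogeny $B' \to \Alb_X$ the left side $X \times_{\Alb_X} B'$ is $3$-dimensional while $F_\alpha \times B' \cong B'$ is $4$-dimensional.

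The correct route is to apply \cite[Theorem~C]{Church24} directly to $f : X \to A$. That theorem takes as input a map to an abelian variety which is smooth over the complement of a codimension-$\ge 2$ subset (together with the good-minimal-model hypothesis) and outputs the desired isogeny $A' \to A$ with $X \times_A A' \birat F \times A'$. Proposition~\ref{prop:smooth_in_codim_1} supplies exactly this smoothness-in-codimension-$1$ hypothesis, using the simplicity of $A$. Your Poincar\'{e}-reducibility and descent machinery is therefore unnecessary, and as you yourself concede in your final paragraph, the descent step through $\ker(C' \to C)$ is not actually carried out.
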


\begin{proof}
This follows immediately from Proposition~\ref{prop:smooth_in_codim_1} and \cite[Theorem C]{Church24}.
\end{proof}

\section{Nowhere vanishing $1$-forms on Mori Fiber Spaces}

\subsection{Conic Bundles}

First we recall some basic facts about conic bundles.

\begin{defn}
A \textit{conic bundle} $f : X \to S$  is a flat proper morphism of finite presentation such that each fiber $X_s$ admits an embedding $X_s \embed \P^2_{\kappa(s)}$ over $\kappa(s)$ as a (possibly singular) conic. 
\end{defn}

\newcommand{\disc}{\mathrm{disc}}

A conic bundle admits a relative dualizing complex $\omega_{X/Y}^\bullet$ which is a line bundle because the fibers are Gorenstein curves. Furthermore, $\omega_{X_s} \cong \cO_{X_s}(-1)$ from the embedding and hence the embedding of each fiber is canonical and $\omega_{X/Y}^\vee$ is $f$-ample. Then $\E = f_* \omega_{X/Y}^\vee$ is a rank $3$ vector bundle whose formation commutes with arbitrary base change because $H^{>0}(X_s, \omega_{X_s}^\vee) = 0$. Finally, the map $X \to \P_Y(\E)$ is a closed embedding which, on fibers, induces the embedding of the conic curve in $\P^2$. Therefore, a conic bundle is equivalently a relative Gorenstein curve with $\omega_{X/Y}^\vee$ relatively very ample. Let $\P := \P_Y(\E)$ and $X \subset \P$ is cut out as a divisor by a section of some bundle. To determine the bundle, we use adjunction
\[ \omega_{X/Y} \cong \omega_{\P/Y} |_{X} \ot \cN_{X|\P} \]
and note that $\omega_{\P/Y} = (\det \E) \ot \cO_{\P}(-3)$ and $\omega_{X/Y} = \cO_{\P}(-1)|_X$ so we find that 
\[ \cN_{X|\P} = [(\det \E)^\vee \ot \cO_{\P}(2)]|_X \]
so $\cO_{\P}(X) \cong (\det \E)^\vee \ot \cO_{\P}(2)$ and $X$ is cut out by a section 
\[ s \in \Gamma(\P, (\det \E)^\vee \ot \cO_{\P}(2)) \]
We will use these relations to compute Chern numbers of $X$. However, these depend not only on $Y$ but also on $c(\E)$ which we will need to relate to another geometric object: the discriminant locus. The discriminant locus 
\[ \Delta_f = \{ y \in Y \mid X_y \text{ is singular} \} \]
is naturally given a scheme structure via the vanishing of a discriminant function
\[ \disc(f) \in \Gamma(Y, (\det \E)^\vee) \]
Indeed, for any $\L \in \Pic(Y)$ and $\P := \P_Y(\E)$ the projectivization of a rank $r$ vector bundle, there is a nonlinear function taking a section $s \in H^0(\P, \pi^* \L \ot \cO_{\P}(n))$ to its discriminant
\[ \disc(s) \in H^0(Y, \L^{r (n-1)^{(r-1)}} \ot (\det \E)^{n(n-1)}) \]

\begin{prop} \cite[Theorem 1.1]{Tan23} \label{prop:discriminat_locus}
Let $f : X \to Y$ be a conic bundle. Then the discriminant locus $\Delta_f \subset Y$ is a reduced divisor and $\Delta_f = - f_* K_{X/Y}^2$. 
\end{prop}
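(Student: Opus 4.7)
The plan is to prove the class-level equality $-f_* K_{X/Y}^2 \sim [\Delta_f]$ by a direct Chern-class calculation on $\P := \P_Y(\E)$, and then upgrade it to an equality of effective reduced divisors by a local analysis at generic points of components of $\Delta_f$. The reducedness of $\Delta_f$ emerges as the main output of this local analysis.

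For the class-level equality, I would use the relations given in the text: $[X] = 2h - \pi^* c_1(\E)$ in $\Pic(\P)$ and $K_{X/Y} = -h|_X$, where $h = c_1(\cO_\P(1))$ and $\pi : \P \to Y$. Combining the projection formula with the Segre-class identities $\pi_* h^2 = 1$ and $\pi_* h^3 = c_1(\E)$ (in the Grothendieck convention consistent with $\omega_{\P/Y} = \det\E \ot \cO_\P(-3)$) yields $f_* K_{X/Y}^2 = c_1(\E)$. On the other side, specializing the discriminant formula to $r = 3$, $n = 2$, $\L = (\det \E)^\vee$ places $\disc(f)$ in $H^0(Y, (\det \E)^{-1})$, so $[\Delta_f] = -c_1(\E)$ as a divisor class. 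This establishes $-f_* K_{X/Y}^2 \sim [\Delta_f]$.

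Next I would upgrade to an equality of effective cycles by comparing multiplicities at the generic point $\eta$ of each irreducible component of $\Delta_f$. Since $X$ is integral and the generic fiber is a smooth conic, the section $\disc(f)$ is nonzero, so $X_\eta$ is a singular conic of rank $2$ (a pair of distinct lines) or rank $1$ (a double line). After a local change of basis so that the symmetric $3 \times 3$ matrix $S$ defining $X$ has $S(\eta)$ in normal form, a direct computation shows $\det(S)$ vanishes to order exactly $1$ in the rank-$2$ case and to order $\geq 2$ in the rank-$1$ case. Hence both reducedness and the cycle-level equality reduce to excluding the rank-$1$ case in codimension $1$ on $Y$.

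The hard part will be precisely this last step. I expect it to follow from flatness of $f$ combined with regularity of $X$ in codimension $1$: a divisorial family of double-line fibers would, via flatness, force a generically non-reduced component on the total space $X$, contradicting that $X$ is a variety. This is essentially the content of \cite{Tan23}. Once this exclusion is in place, the multiplicities of $-f_* K_{X/Y}^2$ and $[\Delta_f]$ agree at every codimension-$1$ point of $Y$, delivering the equality $\Delta_f = -f_* K_{X/Y}^2$ of reduced effective divisors.
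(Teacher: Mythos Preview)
The paper does not supply a proof here; the proposition is simply quoted from \cite{Tan23}, so there is no in-paper argument to compare against. Your Chern-class computation establishing $[\Delta_f] = -c_1(\E) = -f_* K_{X/Y}^2$ as divisor classes is correct and standard.

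The reducedness argument, however, has a real gap. You propose to exclude rank-$1$ fibers in codimension $1$ by arguing that ``a divisorial family of double-line fibers would force a generically non-reduced component on the total space $X$, contradicting that $X$ is a variety.'' This is false: take $Y = \A^2_{s,t}$ and $X = V(x^2 + s(y^2+z^2)) \subset \P^2 \times Y$. Then $X$ is an integral, indeed normal, variety; every fiber over the divisor $\{s=0\}$ is the double line $x^2=0$; and $\Delta_f = V(s^2)$ is non-reduced. So neither integrality nor $R_1$ of $X$ rules out double-line fibers over a divisor. What does rule them out is smoothness of $X$ (or at least $R_2$): over each double-line fiber the residual binary form in $(y,z)$ has a zero, and at that zero the Jacobian of the defining quadric vanishes, producing singular points of $X$ in codimension $\le 2$. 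There is a parallel omission in the rank-$2$ case: your assertion that $\det(S)$ vanishes to order exactly $1$ also needs regularity of $X$ at the node of the fiber, as $x^2 + y^2 + t^2 z^2$ over $\A^1_t$ shows (rank-$2$ fiber at $t=0$, $\Delta_f = V(t^2)$, $X$ singular at one point). In short, the strategy is right, but the hypothesis you invoke is too weak and the stated mechanism is incorrect; you need smoothness of $X$---which holds in every application the paper makes of this proposition---together with a local Jacobian or normal-form argument at codimension-$1$ points of $\Delta_f$.
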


\begin{lemma}
Let $X$ be a smooth proper 4-fold. Then the following are equivalent
\begin{enumerate}
    \item $\chi(X, \Omega_X^p) = 0$ for all $0 \le p \le 4$
    \item the following Chern numbers vanish
    \begin{align*}
    c_1^4 - 3 c_2^2 & = 0 
    \\
    c_1^2 c_2 & = 0
    \\
    c_1 c_3 & = 0
    \\
    c_4 & = 0 
    \end{align*}
\end{enumerate}
\end{lemma}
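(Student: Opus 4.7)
The plan is to read off both (1) and (2) from a single Hirzebruch--Riemann--Roch (HRR) computation. By HRR, for each $0 \le p \le 4$,
\[ \chi(X, \Omega_X^p) = \int_X \ch(\Omega_X^p)\,\mathrm{td}(T_X), \]
and both factors are polynomials in the Chern classes $c_i = c_i(T_X)$; integrating over the fundamental class of the $4$-fold $X$ produces a $\mathbb{Q}$-linear combination of the five top-degree Chern monomials $c_1^4,\ c_1^2 c_2,\ c_1 c_3,\ c_2^2,\ c_4$. For the Todd factor I would use the standard identity
\[ \mathrm{td}_4(T_X) = \tfrac{1}{720}\bigl(-c_1^4 + 4 c_1^2 c_2 + c_1 c_3 + 3 c_2^2 - c_4\bigr), \]
and compute $\ch(\Omega_X^p)$ in each degree from the generating identity $\sum_p y^p \ch(\Omega_X^p) = \prod_i (1 + y e^{-\alpha_i})$ on the Chern roots $\alpha_i$ of $T_X$, converting power sums to elementary symmetric functions via Newton's identities.

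Serre duality $\chi(\Omega_X^p) = \chi(\Omega_X^{4-p})$ on the smooth proper $4$-fold $X$ collapses the five vanishings in (1) to three independent ones. It is cleanest to package these as the three standard specializations of Hirzebruch's $\chi_y$-genus: the Todd genus $\chi_0 = \chi(\mathcal{O}_X) = \mathrm{td}_4(T_X)$; the topological Euler number $\chi_{-1}(X) = c_4$; and the signature $\chi_1(X) = \sigma(X) = \tfrac{1}{45}\bigl(-c_1^4 + 4 c_1^2 c_2 + 3 c_2^2 - 14 c_1 c_3 + 14 c_4\bigr)$, the last by Hirzebruch's signature theorem. The palindromic identity $\chi_y(X) = y^4 \chi_{1/y}(X)$ (equivalent to Serre duality) recovers $\chi(\Omega_X^1)$ and $\chi(\Omega_X^2)$ as explicit combinations of these, so (1) is equivalent to $\mathrm{td}_4(T_X) = c_4 = \sigma(X) = 0$.

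Finally I would solve the resulting $3 \times 5$ linear system in the Chern monomials. The Euler relation gives $c_4 = 0$ immediately; comparing the signature with the Todd relation (after setting $c_4 = 0$) eliminates the $c_1^4$, $c_1^2 c_2$, and $c_2^2$ terms and forces $c_1 c_3 = 0$; what remains is the Todd-genus relation $c_1^4 = 4 c_1^2 c_2 + 3 c_2^2$. The main delicate step, and the expected obstacle, is matching this residual relation with the \emph{two} separate vanishings $c_1^2 c_2 = 0$ and $c_1^4 - 3 c_2^2 = 0$ claimed in (2): three HRR equations naively cut out a codimension-three locus in the five-dimensional Chern monomial space, while (2) cuts out codimension four, so isolating each monomial in the residual relation is the key bookkeeping step. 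Everything else is a routine coefficient computation.
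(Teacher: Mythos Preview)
Your approach is the same as the paper's: apply HRR to each $\chi(\Omega_X^p)$, express the result as a $\mathbb{Q}$-linear combination of the five Chern monomials $c_1^4, c_1^2 c_2, c_2^2, c_1 c_3, c_4$, and row-reduce. Your repackaging via $\chi_{-1}$, $\chi_0$, $\chi_1$ is a nice organizational device, but it is the same linear system.

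However, the obstacle you flag at the end is not a bookkeeping step---it is a genuine gap, and it cannot be closed by the method you (and the paper) propose. By Serre duality $\chi(\Omega^p)=\chi(\Omega^{4-p})$, the five vanishings in (1) collapse to three. Explicitly, clearing denominators in the paper's HRR table and row-reducing gives
\[
c_4 = 0,\qquad c_1 c_3 = 0,\qquad c_1^4 - 4\,c_1^2 c_2 - 3\,c_2^2 = 0,
\]
a rank-$3$ system. Condition (2), on the other hand, imposes four independent linear constraints. So (2) $\Rightarrow$ (1) is immediate, but (1) $\Rightarrow$ (2) does \emph{not} follow from linear algebra: for instance the assignment $c_1^4 = 7$, $c_1^2 c_2 = 1$, $c_2^2 = 1$, $c_1 c_3 = c_4 = 0$ satisfies all three reduced equations yet violates $c_1^2 c_2 = 0$. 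Your instinct that ``three HRR equations naively cut out a codimension-three locus \dots\ while (2) cuts out codimension four'' is exactly right, and no amount of coefficient bookkeeping will bridge that.

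The paper's own proof simply asserts ``row reduction shows these are zero if and only if the above Chern numbers vanish,'' so it has the same gap; you have the advantage of having noticed it. As stated, the lemma's direction (1) $\Rightarrow$ (2) appears to be false without some further input beyond HRR. For the downstream application (Lemma~\ref{lemma:conic_bundle_formulas}) one should check whether the weaker relation $c_1^4 - 4 c_1^2 c_2 - 3 c_2^2 = 0$ (together with $c_1 c_3 = c_4 = 0$) already suffices, or whether an additional geometric constraint is available in that setting.
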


\begin{proof}
This is an elementary, if tedious, application of Hirzebruch–Riemann–Roch. We compute
\[ \chi(X, \Omega_X^p) = \int_X \ch(\Omega_X^p) \mathrm{Td}_X \]
for each $p$ to obtain
\begin{align*}
\chi(X, \struct{X}) &= - \tfrac{1}{720} c_1^4 + \tfrac{1}{180} c_1^2 c_2 + \tfrac{1}{240} c_2^2 + \tfrac{1}{720} c_1 c_3 - \tfrac{1}{720} c_4   
\\
\chi(X, \Omega_X^1) &= - \tfrac{1}{180} c_1^4 + \tfrac{1}{45} c_1^2 c_2 + \tfrac{1}{60} c_2^2 - \tfrac{1}{90} c_1 c_3 - \tfrac{1}{180} c_4 
\\
\chi(X, \Omega_X^2) &= - \tfrac{1}{120} c_1^4 + \tfrac{1}{30} c_1^2 c_2 + \tfrac{1}{40} c_2^2 - \tfrac{1}{120} c_1 c_3 + \tfrac{1}{120} c_4 
\\
\chi(X, \Omega_X^3) &= - \tfrac{1}{180} c_1^4 + \tfrac{1}{45} c_1^2 c_2 + \tfrac{1}{60} c_2^2 - \tfrac{1}{90} c_1 c_3 - \tfrac{1}{180} c_4 
\\
\chi(X, \Omega_X^4) &= - \tfrac{1}{720} c_1^4 + \tfrac{1}{180} c_1^2 c_2 + \tfrac{1}{240} c_2^2 + \tfrac{1}{720} c_1 c_3 - \tfrac{1}{720} c_4 
\end{align*}
Row reduction shows these are zero if and only if the above Chern numbers vanish. 
\end{proof}

We will use the following lemmma repeatedly to prove the uniruled cases of the classification results.

\begin{lemma} \label{lemma:conic_bundle_formulas}
Let $f : X \to Y$ be a conic bundle between smooth proper varieties with $\dim{X} = 4$. Suppose that $\chi(\Omega_X^p) = 0$ for all $p$ and that $c_2(Y) = c_3(Y) = 0$. Then
\begin{align*}
c_1(\E)^3 - c_1(\E) c_1(Y)^2 & = 0
\\
c_1(\E)^2 c_1(Y) + c_1(\E) c_1(Y)^2 & = 0
\\
c_1(Y)^3 - c_1(Y) c_2(\E) & = 0
\\
c_3(\E) & = 0 
\end{align*}
In particular, if $c_1(Y) = 0$ as well, then $c_1(\E)^3 = c_3(\E) = 0$. Recall that $c_1(\E) = -[\Delta_f]$.
\end{lemma}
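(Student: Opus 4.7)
The plan is to exploit the description of $X$ as a Cartier divisor in $\P := \P_Y(\E)$ of class $[X] = 2h - \pi^* c_1(\E)$ (where $h = c_1(\cO_{\P}(1))$ and $\pi : \P \to Y$) in order to express each of the Chern numbers of $X$ from the preceding lemma as an intersection number on $Y$, and then to impose the resulting vanishings.

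First I would compute $c(T_X)$. Combining the normal bundle sequence $0 \to T_X \to T_{\P}|_X \to \cN_{X|\P} \to 0$ with $c(T_{\P}) = c(T_{\P/Y}) \cdot \pi^* c(T_Y)$ and the relative Euler sequence $0 \to \Omega_{\P/Y}(1) \to \pi^* \E \to \cO_{\P}(1) \to 0$, and using the hypothesis $c_2(Y) = c_3(Y) = 0$ to collapse the $T_Y$ contribution to powers of $c_1(Y)$, one obtains the $c_i(T_X)$ as explicit polynomials in $h$, $\pi^* c_i(\E)$, and $\pi^* c_1(Y)$, restricted to $X$; a quick cancellation in the adjunction formula already gives $c_1(T_X) = (h + \pi^* c_1(Y))|_X$. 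Then for each Chern number $c_1^4(X)$, $c_1^2 c_2(X)$, $c_2^2(X)$, $c_1 c_3(X)$, $c_4(X)$, the projection formula gives $\int_X \alpha|_X = \int_Y \pi_*\bigl(\alpha \cdot (2h - \pi^* c_1(\E))\bigr)$, which can be expanded using the Segre identities $\pi_* h^k = s_{k-2}(\E)$ for the rank-$3$ bundle $\E$ into a linear combination of the seven degree-$3$ monomials $c_1(Y)^3$, $c_1(Y)^2 c_1(\E)$, $c_1(Y) c_1(\E)^2$, $c_1(\E)^3$, $c_1(Y) c_2(\E)$, $c_1(\E) c_2(\E)$, $c_3(\E)$.

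Finally I would impose the four vanishings $c_1(X)^4 - 3 c_2(X)^2 = c_1(X)^2 c_2(X) = c_1(X) c_3(X) = c_4(X) = 0$ supplied by the preceding lemma. These translate into four linear relations among the seven degree-$3$ monomials above, and row reducing them produces the four equations in the statement (with the top relation $c_3(\E) = 0$ arising most cleanly from $c_4(X) = 0$ after the other relations are subtracted off). The main obstacle is purely bookkeeping: the computation is a direct but tedious HRR-style expansion, in the same spirit as the previous lemma's proof, and once the algebra is carried out the conclusion is immediate.
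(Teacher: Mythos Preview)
Your proposal is correct and matches the paper's proof essentially line for line: the paper likewise computes $c(T_X)$ via the relative Euler sequence, the tangent sequence of $\P/Y$, and the normal bundle sequence, then pushes the four Chern numbers of the previous lemma down to $Y$ using $[X] = 2\xi - c_1(\E)$ and row reduces. The only cosmetic difference is that the paper records the explicit intermediate expressions for $c_1^4 - 3c_2^2$, $c_1^2 c_2$, $c_1 c_3$, $c_4$ (in which the monomial $c_1(\E)c_2(\E)$ has already dropped out), whereas you leave that step implicit.
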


\begin{proof}
We compute intersection theory on $X$ via the embedding $X \embed \P_Y(\E)$ which is a smooth Cartier divisor with $\cO_{\P}(X) = (\det{\E})^\vee \ot \cO_{\P}(2)$. The following exact sequences,
\begin{center}
    \begin{tikzcd}
        0 \arrow[r] & \cO \arrow[r] & \pi^* \E^{\vee}(1) \arrow[r] & \T_{\P/Y} \arrow[r] & 0 
        \\
        0 \arrow[r] & \T_{\P/Y} \arrow[r] & T_{\P} \arrow[r] & \pi^* \T_Y \arrow[r] & 0 
        \\
        0 \arrow[r] & \T_X \arrow[r] & \T_{\P}|_X \arrow[r] & \cN_X \arrow[r] & 0
    \end{tikzcd}
\end{center}
together give
\[ c(\T_X) = f^* c(\T_Y) \cdot c(\pi^* \E^{\vee}(1))|_X \cdot c(\cN_X)^{-1} = [\pi^* c(\T_Y) \cdot c(\pi^* \E(1)) \cdot (1 - c_1(\E) + 2 \xi)^{-1}]|_X \]
Since this is pulled back from $\P$ we can compute the intersection numbers of the previous lemma by computing on $\P$ and intersecting with $[X] = -c_1(\E) + 2 \xi$. Performing this calculation using the assumption that $c_2(Y) = c_3(Y)$ we obtain
\begin{align*}
c_1(X)^4 - 3 c_2(X)^2 &=  c_1(\E)^3 + 4 c_1(\E)^2 c_1(Y) + 3 c_1(\E) c_1(Y)^2 + 8 c_1(Y)^3 - 8 c_1(Y) c_2(\E) 
\\
c_1^2 c_2 &= c_1(\E)^2 c_1(Y) + c_1(\E) c_1(Y)^2 + 2 c_1(Y)^3 - 2 c_1(Y) c_2(\E)
\\
c_1 c_3 &= -c_1(\E)^2 c_1(Y) - c_1(\E) c_1(Y)^2 - 2 c_3(\E)
\\
c_4 &= -c_1(\E)^3 - c_1(\E)^2 c_1(Y) + 4 c_3(\E)
\end{align*}
The previous lemma shows that if $\chi(X, \Omega_X^p) = 0$ for all $p$ then these Chern numbers vanish. Linear elimination then give the requisite relations.
\end{proof}

\subsection{del Pezzo Fibrations} \label{section:delPezzo}

Let $f : X \to Y$ be a flat projective morphism with $f_* \cO_X = \cO_Y$. We call it a \textit{del Pezzo fibration} if the generic fiber is a smooth del Pezzo surface. It is \textit{Fano-Mori} if moreover $X, Y$ are normal $\Q$-Gorenstein and $-K_X$ is $f$-nef. 
\par 
Suppose $f : X \to Y$ is smooth in codimension $1$ and that $X$ and $Y$ are smooth varieties. We will investigate what can happen in this case. Let $U \subset Y$ be the locus over which $f$ is smooth. Since $f : X \to Y$ is smooth in codimension $1$, this open set $U$ has complement of codimension $\ge 2$. Then $L = R^2 f_* \ul{\CC}$ is a local system and $\codim_Y(Y \sm U) \ge 2$ meaning $\pi_1(U) \to \pi_1(Y)$ is an isomorphism. Hence $L$ is determined by a representation of $\pi_1(Y)$. Furthermore, over, $U$ the local system $L$ underlies a polarized VHS of type $(1,1)$ and hence has finite monodromy \cite{Kat72}. But $\pi_1(U) \to \pi_1(Y)$ is an isomorphism so there is a finite \etale cover $Y' \to Y$ such that $X_{Y'} \to Y$ is a del Pezzo fibration with trivial $R^2 f_* \ul{\CC}$.
\par 
Following the argument of \cite[Theorem 8.4]{HS21(1)} we now assume the a relative MMP over $Y'$ can be run and obtains a sequence of divisorial contractions $X_{Y'} \to X'$ such that $X'$ is smooth and $f' : X' \to Y'$ is a Mori Fiber space with del Pezzo fibers meaning $\rho(X') - \rho(Y') = 1$. This will hold in the particular example when $X$ admits a global $1$-form with no zeros.
\par 
Since $R^2 f'_* \ul{\CC}$ is trivial this implies that the smooth fibers have $\rank{\NS(X'_y)} = 1$ and hence are isomorphic to $\P^2$. Suppose $y \in Y$ is a singular point of $f'$ we assume that there are $\omega_1, \dots, \omega_{m-1}$ with $m = \dim{Y}$ local $1$-forms around $y \in Y$ whose pullback to $X$ are pointwise linearly independent along the entire fiber. Therefore, slicing by a curve whose conormal bundle at $y$ is spanned by $\omega_1, \dots, \omega_{m-1}$ we obtain a slice $f : X'|_C \to C$ so that $X'|_C$ is smooth and the generic fiber is $\P^2$. Then we can apply the results of \cite{Fuj90} to conclude that the fibers are generically reduced (hence reduced since they are lci) \cite[Proposition 1.4]{Fuj90} and normal \cite[Theorem 3.1]{Fuj90}. Therefore, the singular locus is $0$-dimensional on each fiber. Let $Z \subset X'$ be the singular locus of $f' : X' \to Y$. Since $X' \to Y$ is a map of smooth varieties, by purity, Lemma~\ref{lem:purity}, each component $Z' \subset Z$ of the singular locus has $\codim_{X'} Z' \ge 1 + \dim{X} - \dim{Y} = 3$. Since $f'$ is flat and we showed that $Z$ intersects each fiber in a $0$-dimensional scheme we see that $Z \to Y$ is finite so its image has pure codimension $1$. Since $f'$ is smooth in codimension $1$ by assumption, we conclude that $f'$ is smooth.
\par 
Finally, if we moreover assume that the divisorial contractions $X_{Y'} \to X'$ are all blowdowns along \etale multisections over $Y$ then $X_{Y'} \to Y'$ is smooth if and only if $f'$ is smooth. Since $X_{Y'} \to Y'$ is the base change of $f$ along $Y' \to Y$ this shows that $f$ is smooth whenever $f'$ is. 

In summary, we have proved the following.

\begin{prop} \label{prop:del_pezzo_main}
Let $f : X \to Y$ be a del Pezzo fibration between smooth varieties. Assume the following hold:
\begin{enumerate}
    \item $f$ is smooth in codimension $1$
    \item for any finite \etale cover $Y' \to Y$ the base change $X_{Y'}$ admits a sequence of divisorial contractions each of which is the blowdown of an \etale multisection over $Y'$ such that we obtain a Mori Fiber space del Pezzo over $Y'$
    \item at each $y \in Y$ there are locally defined $1$-forms $\omega_1, \dots, \omega_{m-1}$ whose pullback to $X$ are pointwise linearly independent on a neighborhood of the fiber $X_y$
\end{enumerate}
Then $f$ is smooth.
\end{prop}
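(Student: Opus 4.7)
The plan is to follow the line of argument sketched in the discussion preceding the proposition: first reduce to trivial monodromy on $R^2 f_*\underline{\CC}$, then extract a Mori fiber space whose smooth fibers are $\P^2$, and finally combine a slicing argument with purity of the singular locus.

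First I would let $U \subset Y$ be the smooth locus of $f$, so that assumption (1) gives $\codim_Y(Y \setminus U) \ge 2$ and hence $\pi_1(U) \cong \pi_1(Y)$. Over $U$ the local system $R^2 f_*\underline{\CC}$ underlies a polarized VHS of type $(1,1)$, which has finite monodromy by \cite{Kat72}. Replacing $Y$ by the finite étale cover $Y' \to Y$ that trivializes this local system and invoking assumption (2), I obtain a sequence of blowdowns $X_{Y'} \to X'$ along étale multisections such that $f' : X' \to Y'$ is a Mori fiber space. Triviality of $R^2 f'_*\underline{\CC}$ together with relative Picard rank one forces each smooth fiber of $f'$ to have Picard rank one, hence to be isomorphic to $\P^2$.

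Next I would fix $y \in Y'$ and a smooth curve germ $C \subset Y'$ through $y$ meeting $U$ and whose conormal space at $y$ is spanned by the local 1-forms $\omega_1, \dots, \omega_{m-1}$ from assumption (3). The pullbacks of the $\omega_i$ to $X'$ are pointwise linearly independent along $X'_y$, which forces $X' \times_{Y'} C$ to be smooth in a neighborhood of the fiber over $y$. The restricted morphism $X' \times_{Y'} C \to C$ is therefore a flat projective family of surfaces over a smooth curve with generic fiber $\P^2$, so Fujino's results \cite[Proposition 1.4 and Theorem 3.1]{Fuj90} force every fiber to be reduced and normal, and hence $\mathrm{Sing}(f') \cap X'_y$ to be zero-dimensional.

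Finally, purity (Lemma~\ref{lem:purity}) gives $\codim_{X'} Z' \ge 3$ for each component $Z' \subset \mathrm{Sing}(f')$, while zero-dimensionality on each fiber makes $Z' \to Y'$ finite, so its image is either empty or pure of codimension $1$ on $Y'$. Smoothness of $f$ in codimension $1$ is inherited by $f'$ since blowing up étale multisections preserves fiberwise smoothness, so the image of $Z'$ must be empty and $\mathrm{Sing}(f') = \emptyset$. The same equivalence of fiberwise smoothness propagates smoothness of $f'$ back up to $X_{Y'} \to Y'$, and étale descent along $Y' \to Y$ then yields smoothness of $f$. I expect the slicing step to be the main obstacle: one must arrange $C$ so that $X' \times_{Y'} C$ is smooth along all of $X'_y$ (not merely at isolated points), the generic fiber over $C$ is genuinely $\P^2$, and the hypotheses of Fujino's structural results apply to the slice.
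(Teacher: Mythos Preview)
Your proposal is correct and follows the paper's own argument essentially step for step: trivialize the monodromy of $R^2 f_*\underline{\CC}$ via a finite \'etale cover, pass to a Mori fiber space with generic fiber $\P^2$ using assumption (2), use the local $1$-forms in (3) to slice to a smooth total space over a curve and invoke Fujino's results to get normal reduced fibers, then combine purity with finiteness of $\mathrm{Sing}(f') \to Y'$ and smoothness in codimension $1$ to conclude. One small slip you share with the paper: Lemma~\ref{lem:purity} gives $\codim_{X'} Z' \le 3$, not $\ge 3$; it is the upper bound that forces the image of a nonempty $Z'$ to be a divisor, which is what the argument actually uses.
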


\section{Main results}

In this section, we first verify the $\dim{X} = 4$ case of a conjecture of Hao \cite[Conj.~1.5]{Hao23}.

\begin{thm}\label{thm:smooth_map_to_simpleAV}
Let $f : X \to A$ be a morphism from a smooth projective 4-fold $X$ to a simple abelian variety $A$. The following are equivalent:
\begin{enumerate}
\item[(a)] there exists a holomorphic one-form $\omega \in H^0(A, \Omega^1_A)$ such that $f^{\ast}\omega$ is nowhere vanishing;
\item[(b)] $f : X \to A$ is smooth.
\end{enumerate}
Moreover, when this holds then $X$ has the following structure:
\begin{enumerate}
    \item if $\dim{A} = 2$ then there exists a factorization $X \to X^{\min} \to A$ such that $X \to X^{\min}$ is a composition of blowups at \etale multi-sections of $X^{\min} \to A$ and either
    \begin{enumerate}
        \item $K_{X^{\min}}$ is nef in which case $X^{\min} \cong Z \times^G A'$ where $A' \to A$ is an isogeny, $G \subset \ker{(A' \to A)}$ and $Z$ is a smooth proper surface with a $G$-action
        \item $X^{\min}$ admits a conic bundle $X^{\min} \to Y$ and the Stein factorization $Y \to A' \to A$ is the composition of a smooth isotrivial fibration in curves and an isogeny
        \item the Stein factorization $X^{\min} \to A' \to A$ is the composition of a smooth del Pezzo fibration $X^{\min} \to A'$ and an isogeny $A' \to A$
    \end{enumerate}
    \item if $\dim{A} = 3$ then the Stein factorization $X \to A' \to A$ is the composition of a smooth isotrivial fibration in curves and an isogeny 
    \item if $\dim{A} \ge 4$ then $X$ is an abelian variety and $X \to A$ is an isogeny.
\end{enumerate}
\end{thm}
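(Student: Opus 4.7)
The direction (b) $\Rightarrow$ (a) is immediate: if $f$ is smooth then $df$ is fiberwise surjective, so $f^{\ast}\Omega^1_A \hookrightarrow \Omega^1_X$ is fiberwise injective, and any translation-invariant $\omega \in H^0(A,\Omega^1_A)$ (nowhere zero on $A$) pulls back to a nowhere zero form on $X$. For the converse (a) $\Rightarrow$ (b), the entry point is Proposition~\ref{prop:smooth_in_codim_1}, which already gives smoothness of $f$ over an open $U \subset A$ with $\codim_A(A \setminus U) \geq 2$. I would then split on $\dim A$.

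If $\dim A \ge 4$ then $f$ is generically finite and forces $\dim A = 4$; Corollary~\ref{cor:finite_implies_etale} identifies $f$ directly as an isogeny. If $\dim A = 3$, take the Stein factorization $X \xrightarrow{h} A' \xrightarrow{g} A$. The pullback $g^{\ast}\omega$ must be nowhere vanishing on $A'$, for otherwise $f^{\ast}\omega = h^{\ast}g^{\ast}\omega$ would vanish along an entire fiber of $h$; Corollary~\ref{cor:finite_implies_etale} applied to $g$ then shows $g$ is an isogeny and $A'$ is an abelian variety. Now $h$ is a flat curve fibration smooth in codimension~$1$; combining purity (Lemma~\ref{lem:purity}) with the fact that singular fibers of a flat curve fibration jump in codimension~$1$, together with the classification of extremal contractions (Theorem~\ref{thm:contractions}), rules out any degenerate fibers, so $h$ is smooth. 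Isotriviality then follows because $R^1 h_{\ast}\ul{\CC}$ underlies a polarizable VHS over the abelian variety $A'$, whose period map into $\mathcal{A}_g$ is necessarily constant.

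The substantive case is $\dim A = 2$, where the generic fiber of $f$ is a surface and I would run a relative MMP of $X$ over $A$. Theorem~\ref{thm:contractions} classifies each extremal contraction: type~(5) (Fano $3$-fold fibration over an elliptic curve) is ruled out because $A$ is simple of dimension $2$; type~(2) (blowup of an elliptic curve $E$) is excluded because any morphism $E \to A$ is constant --- so $\omega|_E$ would vanish identically, contradicting Lemma~\ref{lemma:blowup_nowhere_vanishing} --- and small contractions were already ruled out in the proof of Theorem~\ref{thm:contractions}. Hence every divisorial step is a type~(1) blowup of a smooth surface $S$ on which $\omega|_S$ is nowhere vanishing; since $\omega|_S$ is pulled back from $A$, the map $S \to A$ must be finite \etale, i.e.\ $S$ is an \etale multisection. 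Iterating until no further divisorial contraction exists, we arrive at $X^{\min}$, and either $K_{X^{\min}}$ is nef --- in which case \cite[Theorems~C and~4.1]{Church24} yield the product description $X^{\min} \cong Z \times^G A'$ of case~(a) --- or $X^{\min}$ admits a Mori fibre space of conic bundle (case~(b), type~(3)) or del Pezzo (case~(c), type~(4)) type. In case~(b), the base $Y$ is a smooth $3$-fold with a nowhere vanishing $1$-form; Stein-factorize $Y \to A'' \to A$, use Corollary~\ref{cor:finite_implies_etale} to identify $A'' \to A$ as an isogeny, and invoke the $\dim A = 3$ analysis above to show $Y \to A''$ is a smooth isotrivial curve fibration. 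In case~(c), $Y \to A$ is finite, hence an isogeny by the same corollary, and Proposition~\ref{prop:del_pezzo_main} applied to $X^{\min} \to Y$ upgrades smoothness in codimension~$1$ to smoothness everywhere.

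\textbf{Main obstacle.} The delicate part is the $\dim A = 2$ case. One must track, at every step of the relative MMP, that the blowdown centers are \etale multisections of the ambient map to $A$, which requires a careful interaction between Lemma~\ref{lemma:blowup_nowhere_vanishing} and the classification of Theorem~\ref{thm:contractions}; one must also verify the hypotheses of Proposition~\ref{prop:del_pezzo_main} in case~(c), in particular producing at each singular point of the del Pezzo fibration a full set of local $1$-forms on the base whose pullbacks stay pointwise linearly independent along the fiber --- this is arranged by taking a translation-invariant basis of $H^0(A,\Omega^1_A)$ and using the nowhere-vanishing hypothesis on $f^{\ast}\omega$ to guarantee nondegeneracy at the problematic fibers.
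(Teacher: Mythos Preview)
Your overall architecture---run MMP, use Theorem~\ref{thm:contractions} to constrain each step, and treat the minimal, del Pezzo, and conic bundle outcomes separately---matches the paper. The $\dim A \ge 4$ case, the bookkeeping showing each divisorial center is an \'etale multisection, and the identification of Proposition~\ref{prop:del_pezzo_main} as the engine for the del Pezzo case are all correct and parallel to the paper.

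The genuine gap is the conic bundle case, which is where most of the work lies and which your ``Main obstacle'' paragraph does not mention. You describe the structure of the base $Y$ but never explain why the composite $X^{\min} \to A$ is \emph{smooth}; without this, (a)~$\Rightarrow$~(b) is not proved. This is not a formality. The paper computes the Chern numbers of $X^{\min}$ via the embedding into $\P_Y(\E)$ (Lemma~\ref{lemma:conic_bundle_formulas}), uses the vanishing $\chi(X,\Omega_X^p)=0$ forced by the nowhere vanishing $1$-form together with $c_2(Y)=c_3(Y)=0$, and extracts numerical constraints on the discriminant divisor $\Delta_g$ (recall $[\Delta_g]=-c_1(\E)$). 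When $\dim A=3$ this yields $\Delta_g^3=0$, and on a simple abelian $3$-fold every nonzero effective divisor is ample, so $\Delta_g=\varnothing$. When $\dim A=2$ there are two further sub-cases depending on whether $Y\to A$ is a $\P^1$-bundle or a higher-genus isotrivial curve fibration; each requires a separate argument with the Chern relations, and in the latter one must show $\Delta_g$ is a union of fibers of $Y\to A$ and then perform additional blowdowns (over an isogeny base change) to reach a smooth conic bundle. None of this follows from purity or from Theorem~\ref{thm:contractions} alone.

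A smaller issue: your $\dim A=3$ argument via direct Stein factorization diverges from the paper and has its own holes. Stein factorization only produces a normal $A'$, whereas Corollary~\ref{cor:finite_implies_etale} requires a smooth domain; and the assertion that singular fibers of a flat curve fibration occur in codimension $1$ is not automatic (multiple fibers can sit over higher-codimension loci). The paper instead obtains the curve fibration as an extremal contraction of type (3), whose base is smooth by Theorem~\ref{thm:contractions}, and then applies the discriminant argument above. Finally, your appeal to ``the $\dim A=3$ analysis'' for the $3$-fold $Y$ over the surface $A''$ is a dimension mismatch; what is needed there is the $3$-fold classification of \cite[Theorem~1.3]{HS21(1)}.
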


\begin{proof}
Note that the cases $\dim{A} \le 1$ are trivial so we may assume that $\dim{A} \ge 2$. 
We first run MMP on $X$. Let $g : X \to Y$ be an extremal contraction and consider the possibilities outlined in Theorem~\ref{thm:contractions}. Since $\dim{A} \ge 2$ we immediately rule out $(5)$. Likeiwse, in case $(2)$ since $Y$ has rational singularities we get a factorization
\begin{center}
\begin{tikzcd}
    X \arrow[rr, "f"] \arrow[rd, "g"]  & & A
    \\
    & Y \arrow[ru]
\end{tikzcd}
\end{center}
and the $1$-form $f^* \omega = g^* \omega_Y$ satisfies $\omega_Y|_E$ is nonzero so $E \subset Y$ maps non-constantly to $A$ since $E \to A$ pulls back $\omega$ to a nonzero form. This contradicts the simplicity of $A$. Therefore, $Y$ is smooth projective and carries a map $f_Y : Y \to A$ so that the $1$-form $f^*_Y \omega$ is nowhere vanishing. In case (1), $g$ is the blowup at a smooth surface $S \subset Y$ so by Lemma~\ref{lemma:blowup_nowhere_vanishing} $\omega_Y|_S$ is nowhere vanishing so $S \to A$ is \etale by Corollary~\ref{cor:finite_implies_etale} so, in particular, $\dim{A} = 2$. Hence if $\dim{A} > 2$ there are no contractions except those of fiber type. Either we run (1) finitely many times to get a minimal model $X \to X^{\min}$ or we reach a conic bundle (3) or a del Pezzo fibration (4). If $X^{\min}$ is minimal then we can apply \cite[Theorem 4.4]{Church24} (recall that Fujino has proven the abundance conjecture for irregular $4$-folds \cite{Fuj10}) to conclude that $X^{\min} \to A$ is smooth and $X^{\min}$ satisfies the decomposition required in (1a). 
\par 
Suppose $g : X^{\min} \to Y$ is a del Pezzo fibration. Then by Theorem~\ref{thm:contractions} (4) $g : X^{\min} \to Y$ is flat and $Y$ is smooth. Hence $\dim{Y} = \dim{A} = 2$. Since $h : Y \to A$ satisfies $h^* \omega = \omega_Y$ is nowhere vanishing we see that $h$ is \etale by Corollary~\ref{cor:finite_implies_etale} and hence $Y$ is a simple abelian surface. By the discussion in \S~\ref{section:delPezzo} we will conclude that $g$ is smooth. Indeed, to apply Proposition~\ref{prop:del_pezzo_main} we need to verify the three hypotheses hold in our situation: 
\begin{enumerate}
    \item $g$ is smooth is codimension $1$ by Lemma~\ref{prop:smooth_in_codim_1}.
    \item after performing a base change along $A' \to A$ to trivialize $R^2 g_* \ul{\CC}$ we can repeat the above argument to show that the only contractions possible are blowdowns at \etale multi-sections and hence we again end up at a smooth $X'^{\min}$ and a map $X'^{\min} \to Y$ which is either a conic bundle (in which case we have reduced to case (3) treated below) or is a del Pezzo fibration with generic fiber $\P^2$ in which case the second assumption is verified.

    \item Since there is a global nonvanishing $1$-form $g^* \omega_Y$ this implies that at each point $y \in Y$ its restriction proves a local $1$-form on $Y$ whose pullback to $X$ is nonvanishing in a neighborhood of the fiber, satisfying exactly the conditions needed for a smooth slice down to $Y$ being a curve to exist. 
\end{enumerate}
\par 
Suppose $g : X^{\min} \to Y$ is a conic bundle, i.e.\ arises from (3) so $Y$ is a smooth projective $3$-fold and $Y \to A$ pulls back $\omega$ to a nowhere vanishing $1$-form. Therefore by \cite[Theorem 1.3]{HS21(1)} we must have $Y \to A$ is either \etale or a smooth isotrivial curve fibration (there are no blowdowns because each has center at an elliptic curve not contracted to $A$ but $A$ is simple and in the case of a conic bundle the discriminant locus is a union of translates of elliptic curves in $A$ of which there are none) followed by an isogeny. By Proposition~\ref{prop:discriminat_locus} $\Delta_g \subset Y$ is a reduced divisor and $[\Delta_g] = -c_1(\E)$ satisfies the relations of Lemma~\ref{lemma:conic_bundle_formulas} because $X$ carries a nonvanishing $1$-form and hence $\chi(X, \Omega_X^p) = 0$ for all $p$ and so does $Y$ so $c_3(Y) = 0$ and furthermore since $Y \to A$ is smooth we conclude that $c_2(Y) = 0$. 
\par 
Now we consider the structure of $Y$. Since $Y \to A$ is smooth, we can apply Stein factorization to get $Y \to A' \to A$ with $A' \to A$ \etale and hence an isogeny. Replacing $A$ by $A'$ we can assume that $Y \to A$ has connected fibers. Let $X'$ be the base change of $X^{\min}$ along $A' \to A$.
Let us consider the cases individually:
\begin{enumerate}
    \item Suppose $\dim{A} = 3$ then $Y \to A$ is \etale so it is an isomorphsim. Since $\Delta_g \subset Y$ is an effective divisor, it is ample if and only if it is nonempty. However, $c_1(Y)$ so the formulas of Lemma~\ref{lemma:conic_bundle_formulas} give $\Delta_g^3 = 0$ hence $\Delta_g = \varnothing$ and thus $g$ is a smooth conic bundle.
    \item Suppose $\dim{A} = 2$ and $\pi : Y \to A$ is a smooth conic bundle then there is a decomposition of the Neron-Severi group
    \[ \NS(Y)_{\RR} = \pi^* \NS(A)_{\RR} \oplus \RR K_{Y/A} \] 
    and $K_{Y/A}$ restricts non-trivially to the fibers. Therefore, we can decompose numerically
    \[ [\Delta_g] = \pi^* D + n K_{Y/A} \] 
    First, note that
    \[ [\Delta_g]^3 = (\pi^* D^3) + 3 (\pi^* D^2) \cdot n K_{Y/A} + 3 \pi^* D \cdot (n K_{Y/A})^2 + n^3 K_{Y/A}^3  = n^3 K_{Y/A}^3 - 6 D^2 n \]
    because $D^3 = 0$ for dimension reasons, and $\pi^* D \cdot K_{Y/A}^2 = D \cdot \pi_* K_{Y/A}^2 = 0$ because $\pi$ is a smooth conic bundle and $-\pi_* K_{Y/A}^2$ is the class of its discriminant locus.
    Plugging into the first equation of Lemma~\ref{lemma:conic_bundle_formulas} and recalling that $[\Delta_g] = -c_1(\E)$ we obtain 
    \[ [\Delta_g] \cdot K_{Y/A}^2 = [\Delta_g]^3 = n^3 K_{Y/A}^3 - 6 D^2 n \]
    and $[\Delta_g] \cdot K_{Y/A}^2 = n K_{Y/A}^3$. Therefore,
    \[ 6 n D^2 = (n - n^3) K_{Y/A}^3 \]
    Plugging into the second formula, $K_{Y/A} [\Delta_g]^2 = K_{Y/A}^2 [\Delta_g]$, we obtain
    \[ n^2 K_{Y/A}^3 - 2 D^2 = n K_{Y/A}^3 \]
    using that $K_{Y/A} \cdot \pi^* D^2 = - 2 D^2$ and $K_{Y/A}^2 \cdot \pi^* D = 0$. These equations imply that $D^2 = 0$. Since $\Delta_g$ is effective and $A$ is simple we conclude that $X \to A$ is smooth.
    \item if $\dim{A} = 2$ and $Y \to A$ is a smooth isotrivial curve fibration of genus $\ge 1$ then it is finite \etale locally trivial so there is an isogeny $A' \to A$ so that $Y_{A'} \cong C \times A'$. Consider the base change $g' : X_{A'} \to C \times A'$ conic bundle. By applying generic smoothness to $X \to C$, we see that for a generic choice of $p \in C$ the divisor $\Delta_{g'|_{X_{A',p}}} := \Delta_{g'} \cap (\{ p \} \times A')$ is the discriminant locus of a conic bundle between smooth varieties. Since $g' : X_{A', p} \to A'$ carries the nonvanishing $1$-form $g'^* \omega$, by the analogue of Lemma~\ref{lemma:conic_bundle_formulas} for conic bundle threefolds, $[\Delta_{g'|_{X_{A',p}}}]^2 = 0$. However, $A$ is a simple abelian variety so no nonzero effective divisor $D \subset A$ satisfies $D^2 = 0$. Therefore, $\Delta_{g'} \cap (\{ p \} \cap A')$ is empty for generic $p \in C$ hence the image of $\Delta_{g'} \to C$ must be a finite collection of points. Since $\Delta_{g'}$ is a divisor, the projection $\Delta_{g'} \to A'$ is a disjoint union of isomorphisms. For each component $D \subset \Delta_{g'}$ the fibers of $X' \to Y$ are a pair of lines meeting at $1$ point since double fibers only appear only at singular points of $\Delta_f$ by \cite[Theorem 4.4]{Tan23}). Hence, the moduli space of lines over $D$ is an \etale double cover of $A'$. Hence passing to a further isogeny, we may assume that the monodromy over each $D$ is trivial. The preimage in $X'$ over each $D \subset \Delta_g$ is the union $E = E_1 \cup E_2$ of two divisors ruled over $D$. Since $D$ is a movable divisor, any line $\ell \subset E_1$ in the ruling satisfies $\ell \cdot E = 0$ but $\ell \cdot E_2 = 1$ so $\ell \cdot E_1 = -1$. The opposite is true for lines in $E_2$ so these curves are numerically distinct and $K_{X'}$-negative. Therefore, there is a divisorial contraction presenting $X'$ as the blowup of $X''$ along a section over $D$. See the argument of \cite{HS21(1)} on p.26 for details in 3-fold setting. Repeating this, we reduce to the case that $X \to Y = C \times A'$ is a smooth conic bundle.
\end{enumerate}

In total, $X \to A$ remains smooth because, possibly after performing a base change $X_{A'} \to A'$ it factors as $X_{A'} \to X' \to A'$ where $X' \to A'$ is a smooth isotrivial bundle and $X_{A'} \to X'$ is a blowdown at \etale multisections $S \subset X'$ of $X' \to A'$. These blowups preserve smoothness because every $1$-form $\omega \in H^0(A, \Omega_A)$ pulls back to a nonvanishing form under the \etale map $S \to A'$.
\end{proof}

\begin{thm}\label{thm:structure_no_elliptic_factor}
Let $X$ be a smooth projective $4$-fold with a nowhere vanishing holomorphic $1$-form $\omega$. Assume $\Alb_X$ does not contain an elliptic curve factor. Then there exists a sequence of blow-downs $\pi : X \to X'$ whose centers are smooth surfaces along which $\omega$ is nonvanishing and a smooth map $X' \to A$ to an abelian variety.
\end{thm}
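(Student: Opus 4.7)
The plan is to run the Minimal Model Program on $X$ and analyze each step using the classification in Theorem~\ref{thm:contractions}. Since small contractions are already excluded there, no flips are needed; the only extremal contractions are of types (1)--(5). The hypothesis that $\Alb_X$ has no elliptic curve factor eliminates types (2) and (5) immediately: a type (5) fibration $X \to E$ would exhibit $E$ as a quotient of $\Alb_X$, hence (by Poincar\'e complete reducibility) as a factor; a type (2) contraction blows down an elliptic curve $E$ on which $\omega|_E$ is nonvanishing, so $E \to \Alb_X$ is non-constant, again producing an elliptic factor of $\Alb_X$.

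Consequently, the only divisorial contractions that can occur are of type (1), which are blow-downs along smooth surfaces on which $\omega$ restricts without zeros and which preserve smoothness of the ambient $4$-fold (by Lemma~\ref{lemma:blowup_nowhere_vanishing}). Iterating, MMP terminates either at a minimal model $X^{\min}$ or at a Mori fiber space $X^{\min} \to Y$ of type (3) or (4). In the minimal case, \cite[Theorem 4.1]{Church24}, available unconditionally for $4$-folds thanks to Fujino's abundance for irregular $4$-folds \cite{Fuj10}, provides a smooth morphism $X^{\min} \to A$ to an abelian variety, and the composition $X \to X^{\min}$ is the desired blow-down sequence.

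In the Mori fiber space case, the base $Y$ is smooth of dimension $2$ or $3$ and inherits a nowhere vanishing $1$-form $\omega_Y$ with $\pi^* \omega_Y = \omega$. Since $\Alb_Y$ is a quotient of $\Alb_X$ it also has no elliptic factor, and the low-dimensional cases of Conjecture~\ref{conj:HS21} established in \cite{HS21(1)} show that the birational modification $Y \birat Y'$ there consists of blow-downs of elliptic curves mapping nonconstantly to $\Alb_Y$---of which there are none---so $Y$ itself admits a smooth morphism $Y \to A$ to an abelian variety. It remains to promote this to a smooth morphism $X' \to A$ for a suitable $X'$ obtained from $X^{\min}$ by further blow-downs along smooth surfaces. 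For a del Pezzo fibration we apply Proposition~\ref{prop:del_pezzo_main}, verifying its three hypotheses as in the proof of Theorem~\ref{thm:smooth_map_to_simpleAV}: flatness (by miracle flatness) together with Remark~\ref{rmk:flat_case} gives smoothness in codimension~$1$; passing to an \'etale cover of $Y$ trivializes the monodromy on $R^2 \pi_* \ul{\CC}$; and the global form $\pi^* \omega_Y$ supplies the required local $1$-forms near each fiber. For a conic bundle, the Chern-class relations of Lemma~\ref{lemma:conic_bundle_formulas} combined with the smooth fibration $Y \to A$ force the components of $\Delta_\pi \subset Y$ to be pulled back from divisors $D \subset A$ with $D^2 = 0$; iterating a blow-down of the degenerate two-line fibers over each component of $\Delta_\pi$ along an \'etale multisection (as in the $3$-fold argument on p.~26 of \cite{HS21(1)}) reduces us to a smooth conic bundle.

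The main obstacle is the conic bundle analysis: although the Chern relations determine the cohomology class of $\Delta_\pi$, extracting the geometric statement that its components are pulled back from curves in $A$ and admit the double-line structure required for further blow-downs demands a careful dimension-by-dimension argument mirroring the simple-$A$ case of Theorem~\ref{thm:smooth_map_to_simpleAV}. The del Pezzo case is comparatively straightforward once the hypotheses of Proposition~\ref{prop:del_pezzo_main} are checked, but the absence of simplicity of $A$ means the smoothness-in-codimension-$1$ conclusion of Proposition~\ref{prop:smooth_in_codim_1} must be replaced by an argument from flatness alone.
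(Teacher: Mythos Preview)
Your overall architecture---run MMP, eliminate contraction types (2) and (5) via the no-elliptic-factor hypothesis, iterate type-(1) blow-downs, and handle the minimal and Mori-fiber-space endpoints separately---matches the paper's. The gap is in the Mori-fiber-space endgame, where you overlook the key simplification that makes everything work.

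The observation you are missing is that the abelian variety $A$ arising as the target of $Y \to A$ is automatically \emph{simple}. In the del Pezzo case $Y$ is a surface, and you correctly note that $Y$ itself admits a smooth map to an abelian variety; but more is true: the image of $Y \to \Alb_Y$ is an abelian surface (by \cite[Theorem 1.4]{Schreieder21}), and a $2$-dimensional abelian variety with no elliptic factor is simple, so in fact $Y$ is a simple abelian surface. In the conic bundle case $Y$ is a $3$-fold and $Y \to A$ has $\dim A \in \{2,3\}$; again a $2$- or $3$-dimensional abelian variety with no elliptic factor is simple (a nontrivial decomposition of a $3$-fold would produce a $1$-dimensional factor). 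Once $A$ is simple, both cases reduce directly to Theorem~\ref{thm:smooth_map_to_simpleAV}, and no new analysis is needed.

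Because you do not see this, you attempt to rerun the del Pezzo and conic bundle arguments without simplicity, and this is where things break. For the del Pezzo case you write that ``flatness together with Remark~\ref{rmk:flat_case} gives smoothness in codimension~$1$,'' but Remark~\ref{rmk:flat_case} does the opposite: it \emph{assumes} smoothness in codimension~$1$ and deduces full smoothness under extra hypotheses. The only tool in the paper that produces smoothness in codimension~$1$ is Proposition~\ref{prop:smooth_in_codim_1}, and it genuinely requires $A$ simple. Likewise, hypothesis~(2) of Proposition~\ref{prop:del_pezzo_main}---that after an \'etale base change the only divisorial contractions are blow-downs along \'etale multisections---is verified in the paper via Corollary~\ref{cor:finite_implies_etale}, which again needs simplicity. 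For the conic bundle case, your sketch that the Chern relations force $\Delta_\pi$ to be pulled back from a divisor $D \subset A$ with $D^2 = 0$ is not substantiated, and in the paper the conclusion $\Delta = \varnothing$ from $D^2 = 0$ uses precisely that a simple abelian variety has no nonzero effective divisor of self-intersection zero.
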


\begin{proof}
Run MMP on $X$ to get an $g : X \to Y$ extremal contraction which must be one of the possibilities outlined in Theorem~\ref{thm:contractions}. Let $A$ be a maximal quotient of $\Alb_X$ so that the $1$-form $\omega$ is pulled back through $\Alb_X \to A$. Since $\dim{A} \ge 2$ we immediately rule out $(5)$. Likeiwse, in case $(2)$ since $Y$ has rational singularities we get a factorization
\begin{center}
\begin{tikzcd}
    X \arrow[rr, "f"] \arrow[rd, "g"]  & & A
    \\
    & Y \arrow[ru]
\end{tikzcd}
\end{center}
and the $1$-form $f^* \omega = g^* \omega_Y$ satisfies $\omega_Y|_E$ is nonzero so $E \subset Y$ maps non-constantly to $A$ since $E \to A$ pulls back $\omega$ to a nonzero form. Since we assumed $\Alb_X$ has no elliptic factor, this does not occur. Therefore, $Y$ is smooth projective and carries a map $f_Y : Y \to A$ so that the $1$-form $f^*_Y \omega$ is nowhere vanishing. In case (1), $g$ is the blowup along a smooth surface $S \subset Y$ so by Lemma~\ref{lemma:blowup_nowhere_vanishing} $\omega_Y|_S$ is nowhere vanishing. Either we run (1) finitely many times to get a minimal model $X \to X^{\min}$ or we reach a conic bundle (3) or a del Pezzo fibration (4). If $X^{\min}$ is minimal, then we can apply \cite[Theorem 4.4]{Church24} (again using Fujino's proof of the abundance conjecture for irregular $4$-folds \cite{Fuj10}) to conclude that $X^{\min} \to A \to B$ is smooth for some quotient $A \to B$.
\par 
As before, we reduce to showing that if $X = X^{\min}$ has the structure of a del Pezzo fibration or a conic bundle and admits a non-vanishing $1$-form, then it has the structure required in conclusion of the theorem. 
\begin{enumerate}
    \item Let $f : X \to S$ be a del Pezzo fibration. Then $\Alb_{X} = \Alb_{S}$ so if $\omega$ is a nonvanishing $1$-form on $X$, it is pulled back through $f$ and hence must be a nonvanishing $1$-form on $S$. Since $\Alb_X = \Alb_S$ has no abelian factor, $S \to \Alb_S$ must be generically finite onto its image. Indeed, otherwise it would factor through a curve of positive genus contradicting the existence of a $1$-form with no zero. Hence the image of $S \to \Alb_S$ is a surface and is fibered in tori by \cite[Theorem 1.4]{Schreieder21} hence is an abelian surface. Furtherore, this abelian surface is simple because $\Alb_S$ has no elliptic quotients. Therefore, applying Corollary~\ref{cor:finite_implies_etale} we conclude that $S$ is a simple abelian surface so $S \iso \Alb_S$. Therefore, we reduce to exactly the case of Theorem~\ref{thm:smooth_map_to_simpleAV} which proves that $f$ is smooth. 
    \item Let $f : X \to S$ be a conic bundle. As before, $\Alb_X = \Alb_S$ and $\omega = f^* \omega_S$ with $\omega_S$ a nonvanishing $1$-form. By \cite[Theorem 1.3]{HS21(1)} -- using that $\Alb_X = \Alb_S$ has no elliptic factors -- $S$ has the structure of a smooth map $g : S \to A$ to an abelian variety such that 
    \begin{enumerate}
        \item if $\dim{A} = 3$ then $g$ is an isogeny, 
        \item if $\dim{A} = 2$ then $g$ is a smooth isotrivial curve fibration.
    \end{enumerate}
    Note that, in either case, $A$ is simple because $\Alb_S$ has no elliptic quotient and in the first case $A$ is isogenous to $\Alb_S$ and an abelian $3$-fold with no $1$-dimensional factor cannot be decomposable. 
    In the first case, we are done by appealing to Theorem~\ref{thm:smooth_map_to_simpleAV}. In the second case, if $\omega$ were pulled back along $g$ we would be done by application of the same theorem. This works if $g$ is a $\P^1$-bundle. Otherwise, $g$ is strongly isotrivial in the sense that after an isogeny $A' \to A$ it is a product $C \times A'$ for $C$ a curve of genus $\ge 2$. Then $f : X_{A'} \to C \times A'$ is a conic bundle such that there exists a $1$-form $\omega$ without any zero. The smoothness of $f$ in this scenario is exactly what we proved in part (3) of the proof of Theorem~\ref{thm:smooth_map_to_simpleAV}. 
\end{enumerate}
\end{proof}

\bibliographystyle{alpha}
\bibliography{refs.bib}

\end{document}